\documentclass[10pt]{article}
\usepackage{amsmath,amsthm,amssymb}

\newtheorem{theorem}{Theorem}[section]

\newtheorem{lemma}[theorem]{Lemma}
\newtheorem{corollary}[theorem]{Corollary}
\newtheorem{remark}[theorem]{Remark}

\newcommand{\refpart}[1]{{\it (#1)}}  

\newcommand{\hpgo}[2]{{}_{#1}\mbox{\rm F}_{\!#2}}
\newcommand{\hpg}[5]{{}_{#1}\mbox{\rm F}_{\!#2}\! \left(\left.{#3 \atop #4}\right|\, #5 \right) }

% remove Knuth's American inelegance
\renewcommand\ge\geqslant
\renewcommand\geq\geqslant
\renewcommand\le\leqslant
\renewcommand\leq\leqslant

\newcommand{\CC}{\mathbb{C}}

\newcommand{\PP}{\mathbb{P}}

\newcommand{\RR}{\mathbb{R}}
\newcommand{\ZZ}{\mathbb{Z}}
\newcommand{\bta}{S }

\title{Counting derangements and Nash equilibria}
% \title{MacMahon's master theorem and totally mixed Nash equilibria}

\author{Raimundas Vidunas\footnote{Lab of Geometric \& Algebraic Algorithms,
        Department of Informatics \& Telecommunications,
        National Kapodistrian University of Athens,
        Panepistimiopolis 15784, Greece. E-mail: {\sf rvidunas@di.uoa.gr}.
        Supported by 
        the EU (European Social Fund) and Greek National Fund through 
        the operational program ``Education and Lifelong Learning" of the
        National Strategic Reference Framework, research funding program
        ``ARISTEIA'', project ``ESPRESSO: Exploiting Structure in 
        Polynomial Equation and System Solving in Physical Modeling".}}

\date{}
        
\begin{document}

\maketitle

\begin{abstract}
The maximal number of totally mixed Nash equilibria % (TMNE) 
in games of several players equals the number of block derangements,
as proved by McKelvey and McLennan.
On the other hand, counting the derangements is a well studied problem.
The numbers are identified as linearization coefficients for Laguerre polynomials. 
MacMahon derived a generating function for them as an application
of his master theorem. This article relates the algebraic, combinatorial
and game-theoretic problems that were not connected before.
New recurrence relations, hypergeometric formulas and asymptotics
for the derangement counts are derived. An upper bound for the total number
of all Nash equilibria is given.
\end{abstract}

\section{Game-theoretic introduction}

Game theory offers mathematical modeling of strategic decision making.
A central concept is that of {\em Nash equilibrium}: % A Nash equilibrium in game theory 
it is a combination of strategies of participating players such that no player 
can improve his payoff by unilaterally changing his strategy. The strategies 
can be pure (when a player chooses a single available option) or mixed (when a player 
makes a choice randomly, by assigning probabilities to his options). 
Totally mixed strategies play any available strategy with a non-zero probability. 
A {\em totally mixed Nash equilibrium} (or TMNE for shorthand) 
is a Nash equilibrium where everyone plays a totally mixed strategy.

As we recall in \S \ref{sec:algeo}, finding TMNE leads to a system 
of polynomial equations for the probabilities. %The structure of the polynomial system 
% depends on the number of players and the number of their options. 
The polynomial system has finitely many solutions (in $\CC)$ generalically, 
hence games with generic payoffs have finitely many TMNE. 
An interesting question is: 
\begin{quote}
Consider a game of $S$ players, each with $m_1,m_2,\ldots,m_S$ options respectively.
Assuming generic payoffs, what is the maximal possible number of TMNE %totally mixed Nash equilibria 
for fixed $S$ and $m_1,m_2,\ldots,m_S$?
\end{quote}
McKelvey and McLennan \cite{McKL97} answered this question 
in terms of a combinatorial count of certain partitions of 
\begin{equation} \label{eq:nn}
N:=m_1+m_2+\ldots+m_S-S
\end{equation}
elements into $S$ sets with $m_j-1$ elements each (\mbox{$j\in\{1,2,\ldots,S\}$}).
The count can be eloquently formulated as follows:
\begin{quote}
Consider a card recreation %game 
of $S$ players, each with $n_1,n_2,\ldots,n_S$ cards originally.
All cards are shuffled together, and then each player $j$ receives the same number $n_j$ 
of cards as originally. Let $E(n_1,n_2,\ldots,n_S)$ denote the number of ways to deal the cards in 
such a way that no player receives a card that he held originally.

The maximal number of TMNE %totally mixed Nash equilibria 
in a generic % (non-card) 
(i.e., regular \cite{McKL97}) game with $m_1,m_2,\ldots,m_S$ pure options equals 
\begin{equation} \label{eq:es}
E(m_1-1,m_2-1,\ldots,m_S-1).
\end{equation}
\end{quote}
If $n_1=\ldots=n_S=1$, these partitions (or permutations) are known as {\em derangements}.
The permutations of $n_1+\ldots+n_S$ elements %cards 
that give their partitions  into $S$ subsets as restricted in the card recreation
are well-studied \cite{EG76}, \cite{AIK78}, \cite{FZ88}
as {\em generalized derangements} (or by a similar term). 
The count of the permutations or partitions differs by the factor $n_1!\cdots n_S!$.
Let us refer to the restricted partitions as {\em block derangements}. 

MacMahon \cite[\S III, Ch. III]{McMh16} gave a generating function for the number 
of block derangements, as one of the first applications of his master theorem. 
Let $\sigma_j$ denote the $j$-th elementary symmetric polynomial in the variables $x_1,x_2,\ldots,x_S$:
\begin{equation}  \label{eq:sigm}
% \sigma_0=1, \qquad
\sigma_1=\sum_{i=1}^S x_i, \qquad \sigma_2=\sum_{i=2}^{S}\sum_{j=1}^{i-1} x_ix_j, 
\qquad  \ldots, \qquad \sigma_S=\prod_{i=1}^S x_i.
\end{equation}
MacMahon proved that $E(n_1,n_2,\ldots,n_S)$ equals the coefficient to 
$x_1^{n_1}x_2^{n_2}\cdots x_S^{n_S}$ in the multivariate Taylor expansion 
of the rational function
\begin{equation} \label{eq:fcard}
F_{\rm M}(x_1,x_2,\ldots,x_S)=
\frac{1}{1-\sigma_2-2\sigma_3-\ldots-(S-1)\sigma_S}
\end{equation}
at $(x_1,x_2,\ldots,x_S)=(0,0,\ldots,0)$. 
The generating function for the maximal number of TMNE is adjusted 
by the argument shift in (\ref{eq:es}). The adjustment is by the factor $\sigma_S$,
thus the maximal number of TMNE
(in a generic game of $S$ players with respectively $m_1,m_2,\ldots,m_S$ pure options) 
equals the coefficient to $x_1^{m_1}x_2^{m_2}\cdots x_S^{m_S}$ 
in the multivariate Taylor expansion of
\begin{equation} \label{eq:fnash}
F_{\rm N}(x_1,x_2,\ldots,x_S)=
\frac{\sigma_S}{1-\sigma_2-2\sigma_3-\ldots-(S-1)\sigma_S}
\end{equation}
at $(x_1,x_2,\ldots,x_S)=(0,0,\ldots,0)$. 

Moreover, the numbers $E(n_1,n_2,\ldots,n_S)$ are identified \cite{EG76}
as {\em linearization coefficients} for the Laguerre polynomials \cite{Wiki}:
\begin{equation*}
L_n(z)=\sum_{k=0}^n {n\choose k} \frac{(-1)^k}{k!} z^k.
\end{equation*}
The Laguerre polynomials form an orthonormal basis in the Hilbert space $L^2([0,\infty);w(z))$,
with respect to the weight function $w(z)=\exp(-z)$. Therefore
\begin{equation*}
\int_0^{\infty} L_n(z)L_m(z)\exp(-z)dz=\delta_{n,m}.
\end{equation*}
The linearization problem  \cite{AIK78}, \cite{FZ88} 
is interested in expressing products of Laguerre polynomials
in the basis of Laguerre polynomials: 
\begin{align*}
L_{n_1}(z)L_{n_2}(z)\cdots L_{n_T}(z)=
\sum_{k=0}^{N} C^{(k)}_{n_1,n_2,\ldots,n_T}\,L_{k}(z).
\end{align*} 
The linearization coefficients $C^{(k)}_{n_1,n_2,\ldots,n_T}$ are naturally computed as 
\begin{align} \label{eq:linc1}
C^{(k)}_{n_1,n_2,\ldots,n_T}
 = \int_0^\infty L_{k}(z)L_{n_1}(z)L_{n_2}(z)\cdots L_{n_T}(z)\exp(-z)dz.
\end{align} 
Remarkably \cite{EG76}, 
\begin{align} \label{eq:linc2}
E(k,n_1,n_2,\ldots,n_T)=\varepsilon C^{(k)}_{n_1,n_2,\ldots,n_T},
\end{align} 
where $\varepsilon=(-1)^{k+n_1+n_2+\ldots+n_T}$.
Zeilberger \cite{Zeilb} tells the interesting story of Gillis investigating the derangement count
in 1928, deriving the same recurrences for the linearization coefficients in 1960,
and noticing the coincidence in 1976. Quite similarly, this article relates the results
in \cite{McKL97} to MacMahon's master theorem and known results on block derangements.

The Nash equilibria, the multihomogeneous B\'ezout bound for their algebraic system
and MacMahon's master theorem are related in \S \ref{sec:algeo}. The supplementing article
\cite{EV14} generalizes the application of MacMahon's master theorem 
to more general algebraic systems, and discusses computational complexity.
Section \ref{sec:special} here summarizes easy special cases of counting block derangements.
Truly new results are presented in \S \ref{sec:recr}--\ref{sec:asympt}:
compact recurrence relations and asymptotics for $E(n_1,\ldots,n_S)$ and an upper bound 
for the total number of all Nash equilibria. % \ref{sec:allnash}

\section{Applying MacMahon's master theorem}
\label{sec:algeo}

Let $\Omega=\{1,2,\ldots,S\}$ denote the set of players, and let 
$\Theta_j=\{1,2,\ldots,m_j\}$ denote the set of options of the player $j\in\Omega$.  
For any combination $(k_1,k_2,\ldots,k_S)$ of available options  with % $k_j\in\{1,2,\ldots,m_j\}$,
$k_i\in\Theta_i$ each player $j\in\Omega$ %\{1,2,\ldots,S\}$ 
receives a defined payoff $a^{(j)}_{k_1,k_2,\ldots,k_S}$. 
The totally mixed strategies in a TMNE are then such that each player $j$ receives 
the same payoff $P_j$ with any strategy (as long as others' strategies do not change). 
For $j\in\Omega$, $k\in\Theta_j$, let $p^{(j)}_k$ denote 
the probability that the $j$th player chooses the $k$th option in his strategy. 
The non-zero probabilities must satisfy the algebraic equations:
\begin{align} \label{eq:tmne1}
P_1=& \sum_{k_2,k_3,\ldots,k_S} a^{(1)}_{i,k_2,k_3,\ldots,k_S}\,
p^{(2)}_{k_2}p^{(3)}_{k_3}\cdots p^{(S)}_{k_S},  
\qquad \mbox{for } i\in\Theta_1, \\ % \{1,2,\ldots,m_1\},\\
P_2=& \sum_{k_1,k_3,\ldots,k_S} a^{(2)}_{k_1,i,k_3,\ldots,k_S}\,
p^{(1)}_{k_1}p^{(3)}_{k_3}\cdots p^{(S)}_{k_S},
 \qquad \mbox{for } i\in\Theta_2, \\ %\{1,2,\ldots,m_2\},\\
 & \cdots \nonumber \\  \label{eq:tmnes}
P_S=& \sum_{k_1,\ldots,k_{S-1}} a^{(S)}_{k_1,\ldots,k_{S-1},i}\;
p^{(1)}_{k_1}p^{(2)}_{k_2}\cdots p^{(S-1)}_{k_{S-1}}, 
\quad\, \mbox{for } i\in\Theta_S, \\ %\{1,2,\ldots,m_S\},\\
\label{eq:tmnep}
1=& \; p^{(j)}_1+p^{(j)}_2+\ldots+p^{(j)}_{m_j}, 
\qquad\mbox{for } j\in\Omega. %\{1,2,\ldots,S\}.
\end{align}
This is a system of $m_1+\ldots+m_S+S$ equations in exactly so many variables
(counting $P_1,P_2,\ldots,P_S$ as well). 
If the payoffs $a^{(j)}_{k_1,k_2,\ldots,k_S}$ are generic, the number of solutions is finite.
When the strategies are not totally mixed, there are inequality conditions as we remind 
in \S \ref{sec:allnash}.

We keep the notation $n_j=m_j-1$ for $j\in\Omega$.
As in \cite{McKL97}, we simplify the algebraic system (\ref{eq:tmne1})--(\ref{eq:tmnep}) 
by eliminating $P_1,\ldots,P_S$ in each block (\ref{eq:tmne1})--(\ref{eq:tmnes}),
and ignoring the equations (\ref{eq:tmnep}). This gives us a system of $N=m_1+\ldots+m_S-S$
equations in $m_1+\ldots+m_S$ variables, but the equations are multihomogeneous and
multilinear in the $S$ blocks $p^{(j)}_1,\ldots,p^{(j)}_{m_j}$ \mbox{($j\in\Omega$)} %\{1,\ldots,S\}$) 
of variables. The multihomogeneous system % has $N=n_1+\ldots+n_S$ equations 
on  $\PP^{n_1}\times\cdots\times\PP^{n_S}$ has finitely many complex solutions generically,
since the product of projective spaces has the total dimension $N=n_1+\ldots+n_S$,
Each TMNE % totally mixed Nash equilibrium 
gives a solution of the multihomogeneous system.
Conversely: a multihomogeneous solution is normalized to a TMNE % Nash equilibrium
by the equations (\ref{eq:tmnep}) if it is defined over $\RR$ and representative values
$p^{(j)}_1,\ldots,p^{(j)}_{m_j}$ in each block $j\in\Omega$ are either all positive or all negative.

The number of TMNE in generic games % totally mixed Nash equilibria 
is bounded by the BKK (Bernstein-Khovansky-Koushnirenko \cite{Bern75}) 
bound of the algebraic system.
The TMNE system (\ref{eq:tmne1})--(\ref{eq:tmnes}) is multihomogenous,
hence the more specific {\em multihomogenous B\'ezout bound} applies.
From a geometric perspective, we have a full intersection on 
$\PP^{n_1}\times\cdots\times\PP^{n_S}$ generically.
The theorem below is formulated for dehomogenized variables.
\begin{theorem}[Multihomogeneous B\'ezout bound]\label{thm:mBez}
Consider a system of $N$ polynomial equations in $N$ affine variables,
partitioned into $S$ subsets so that the $j$-th subset includes
$n_j$ affine variables, and $N=n_1+\cdots +n_S$.
Let $d_{ij}$ be the degree of the $i$-th equation in
the $j$-th variable subset, for $i=1,\dots,N$ and $j=1,\dots,S$.
If the number of complex roots of the polynomial system is finite,
the coefficient of $x_1^{n_1}\cdots x_S^{n_S}$ in
\begin{equation} \label{eq:mhbprod}
\prod_{i=1}^N (d_{i1}x_1 + \cdots + d_{iS}x_S ) 
\end{equation}
is an upper bound for the number of complex roots.
For generic coefficients this bound is tight.
\end{theorem}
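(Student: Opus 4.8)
The plan is to establish the multihomogeneous B\'ezout bound by intersection theory on the product of projective spaces $X=\PP^{n_1}\times\cdots\times\PP^{n_S}$. First I would homogenize the system: each affine equation of degree $d_{ij}$ in the $j$-th variable subset becomes a multihomogeneous form, defining a divisor $D_i$ on $X$ whose class in the Picard group is $\sum_{j=1}^S d_{ij}H_j$, where $H_j$ denotes the pullback of the hyperplane class from the $j$-th factor $\PP^{n_j}$. The number of isolated solutions of the system, counted with multiplicity, is then the intersection number $D_1\cdot D_2\cdots D_N$ in the Chow ring (or cohomology ring) of $X$, provided the intersection is zero-dimensional; in general it is bounded above by this number since excess components contribute nonnegatively.

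Second I would invoke the known structure of $A^*(X)$ (or $H^*(X;\ZZ)$): it is the truncated polynomial ring $\ZZ[H_1,\ldots,H_S]/(H_1^{n_1+1},\ldots,H_S^{n_S+1})$, and the degree map sends the top class $H_1^{n_1}\cdots H_S^{n_S}$ to $1$. Consequently the intersection number equals the coefficient of $H_1^{n_1}\cdots H_S^{n_S}$ in the product $\prod_{i=1}^N\bigl(\sum_{j=1}^S d_{ij}H_j\bigr)$, which upon renaming $H_j\mapsto x_j$ is exactly the coefficient of $x_1^{n_1}\cdots x_S^{n_S}$ in (\ref{eq:mhbprod}). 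This gives the asserted bound.

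Third, for the tightness claim under generic coefficients, I would argue that for a generic choice of the coefficients the varieties $D_i$ meet transversally in finitely many points, so no excess intersection occurs and the bound is attained. The cleanest route is a Bertini-type argument: the linear system of forms of each prescribed multidegree is base-point-free on $X$ (one can realize such a form as a pullback of a hyperplane section under a suitable Segre--Veronese embedding of $X$), so a generic member is smooth away from a fixed base locus, and a generic intersection of $N$ of them is a reduced set of points of the expected cardinality. Alternatively one exhibits one explicit system (e.g.\ products of generic linear forms in the separate blocks) for which the count is exact, and then semicontinuity of the number of isolated solutions does the rest.

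The main obstacle is the tightness half: the upper bound is essentially formal once the cohomology ring of $X$ is in hand, but showing that genericity of the \emph{coefficients} (as opposed to genericity within the full space of multihomogeneous systems) suffices to kill all excess intersection requires care, since the coefficients here are constrained to realize degree exactly $d_{ij}$ in block $j$. I would handle this by checking base-point-freeness of each multidegree-$(d_{i1},\ldots,d_{iS})$ linear system on $X$ and then applying Bertini blockwise; the only subtlety is at the coordinate hyperplanes, which is why passing through the Segre--Veronese embedding — where these systems become genuine hyperplane sections — is the safest way to make the transversality argument rigorous.
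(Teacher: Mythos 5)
The paper does not actually prove this theorem: its ``proof'' is the single line ``See \cite{MSW95}, for example.'' So you are supplying an argument where the paper only delegates to the literature. Your route is the standard intersection-theoretic one, and it is essentially the proof one finds in the references: homogenize into $X=\PP^{n_1}\times\cdots\times\PP^{n_S}$, identify each equation with a divisor of class $\sum_j d_{ij}H_j$, use $A^*(X)\cong\ZZ[H_1,\ldots,H_S]/(H_1^{n_1+1},\ldots,H_S^{n_S+1})$ to compute the intersection number as the stated coefficient, and get tightness from Bertini via the Segre--Veronese embedding. Two points deserve a little more care than your sketch gives them. First, the upper-bound half in the presence of positive-dimensional components (typically at infinity, i.e.\ on the coordinate hyperplanes added in each factor) is not purely formal: you need the refined B\'ezout statement that the isolated points of $D_1\cap\cdots\cap D_N$, counted with multiplicity, are bounded by the intersection number $D_1\cdots D_N$; this follows from Fulton's excess-intersection machinery (nonnegativity of the contributions of the distinguished components for intersections of nef divisor classes) or from a conservation-of-number/deformation argument, and it is worth saying which you are invoking. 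Second, your worry about ``genericity of the coefficients'' versus genericity in the full linear system dissolves on inspection: an affine polynomial of degree at most $d_{ij}$ in the $j$-th block, with all coefficients free, homogenizes to an arbitrary element of the complete linear system of multidegree $(d_{i1},\ldots,d_{iS})$, so the two notions of genericity coincide and the blockwise Bertini argument applies directly. With those two points made explicit, your proof is correct and is a genuine addition relative to the paper, which proves nothing here.
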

\begin{proof}
See \cite{MSW95}, for example.
\end{proof}

\begin{corollary} \label{eq:tmneb}
Let $X=x_1+\ldots+x_S$. The B\'ezout multihomogenous bound 
for the system $(\ref{eq:tmne1})$--$(\ref{eq:tmnes})$ equals the coefficient to
$x_1^{n_1}\cdots x_S^{n_S}$ in the product
\begin{equation} \label{eq:mbx}
 \prod_{j=1}^S (X-x_j)^{n_j}. 
\end{equation}
\end{corollary}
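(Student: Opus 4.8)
The plan is to apply Theorem~\ref{thm:mBez} directly to the reduced multihomogeneous system described in the paragraph preceding the corollary, and to identify the degree matrix $(d_{ij})$ explicitly. Recall that after eliminating $P_1,\ldots,P_S$ from each block $(\ref{eq:tmne1})$--$(\ref{eq:tmnes})$, we obtain $N$ equations in the $S$ blocks of variables $p^{(j)}_1,\ldots,p^{(j)}_{m_j}$. The key observation is bookkeeping: block $(\ref{eq:tmne1})$ contributes $n_1=m_1-1$ equations (one fewer than the number $m_1$ of equations $P_1=\cdots$, because eliminating the single unknown $P_1$ removes one), block $(\ref{eq:tmne2})$ contributes $n_2$ equations, and so on, for a total of $N=n_1+\cdots+n_S$ equations, matching the ambient dimension.

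The main step is to compute the degree of a typical equation coming from block $j$ in each variable subset. An equation obtained by eliminating $P_j$ from $(\ref{eq:tmnej})$ is a difference of two of the right-hand sides there; each such right-hand side is multilinear, containing exactly one factor $p^{(i)}_{k_i}$ for every $i\neq j$ and \emph{no} factor from block $j$ itself. Hence an equation originating in block $j$ has degree $1$ in each variable subset $i\neq j$ and degree $0$ in subset $j$; that is, $d_{i\ell}=1$ for $\ell\neq j$ and $d_{ij}=0$. The corresponding linear form in $(\ref{eq:mhbprod})$ is therefore $x_1+\cdots+x_S-x_j = X-x_j$. Since block $j$ supplies $n_j$ such equations, the full product $(\ref{eq:mhbprod})$ becomes $\prod_{j=1}^S (X-x_j)^{n_j}$, and Theorem~\ref{thm:mBez} identifies the bound as the coefficient of $x_1^{n_1}\cdots x_S^{n_S}$ in this product, with tightness for generic payoffs since generic payoffs yield generic coefficients in the reduced system.

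The one point requiring care — and the only place the argument is not pure bookkeeping — is verifying that the elimination of $P_j$ does not raise the multidegree: one must check that forming differences of the multilinear expressions in $(\ref{eq:tmnej})$ keeps every resulting equation multilinear with the stated degree pattern, and in particular that no spurious dependence on block $j$ is introduced and that the generic non-vanishing of coefficients is preserved so that the $n_j$ equations from block $j$ are genuinely independent with exactly the degree form $X-x_j$. Granting this, the corollary follows immediately from Theorem~\ref{thm:mBez}.
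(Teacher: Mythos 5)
Your proof is correct and follows essentially the same route as the paper: eliminate each $P_j$ by equating the $m_j$ expressions pairwise to get $n_j$ equations that omit block $j$ and are linear in every other block, so each contributes the factor $X-x_j$ in the multihomogeneous B\'ezout product. The ``point requiring care'' you flag is immediate, since a difference of two multilinear forms with the same degree pattern is again of that pattern, and the B\'ezout bound only needs the degrees as upper bounds.
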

\begin{proof}
Let $P_j(i)$ denote the expression of $P_j$ in (\ref{eq:tmne1})--(\ref{eq:tmnes})
with the specified $i$. The equations 
\begin{align*}
P_1(1)=P_1(2), \ P_1(1)=P_1(3), \ \ldots, \ P_1(1)=P_1(m_1)
\end{align*}
contribute the factor $(X-x_j)^{n_j}$ in (\ref{eq:mhbprod}),
as they do not contain variables of the $j$th block, and are linear
in the variables of each other block.
\end{proof}
The corollary % Corollary \ref{eq:tmneb} 
begs application of MacMahon's master theorem \cite{McMh16}.
This theorem has most powerful applications to counting restricted 
partitions and proving binomial identities. 
\begin{theorem}[MacMahon, 1916]
Consider  a complex matrix % $\bta\times\bta$ matrix % as in $(\ref{eq:matrix})$.
\begin{equation} \label{eq:matrix}
A=\left( \begin{array}{cccc}
a_{11} & a_{12} & \cdots & a_{1\bta} \\
a_{21} & a_{22} & \cdots & a_{2\bta} \\
\vdots & \vdots & \ddots & \vdots \\
a_{\bta1} & a_{\bta2} & \cdots & a_{\bta\bta}
\end{array} \right).
\end{equation}
Let $x_1,x_2,\ldots,x_\bta$ be formal variables, 
and let $V$ denote the diagonal matrix with the non-zero entries $x_1,x_2,\ldots,x_\bta$.
The coefficient to $x_1^{n_1}x_2^{n_2}\ldots x_\bta^{n_\bta}$ in
\begin{equation} \label{eq:mhms0}
\prod_{j=1}^{\bta} (a^{}_{j1}x^{}_1+a^{}_{j2}x^{}_2+\ldots+a^{}_{j\bta}x^{}_\bta)^{n_j}
\end{equation}
equals the coefficient to $x_1^{n_1}x_2^{n_2}\ldots x_\bta^{n_\bta}$ in
the multivariate Taylor expansion of
\begin{equation} \label{eq:mhms}
f(x_1,x_2,\ldots,x_\bta)=\frac{1}{\det (\mbox{\rm Id}-VA)}
\end{equation}
around $(x_1,x_2,\ldots,x_\bta)=(0,0,\ldots,0)$.
\end{theorem}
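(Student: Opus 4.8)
The plan is to prove MacMahon's master theorem by the standard residue/generating-function argument, organized around the observation that both sides of the claimed identity are coefficient extractions from power series, so it suffices to compare the two series termwise after a single algebraic manipulation of $1/\det(\mathrm{Id}-VA)$. First I would fix exponents $n_1,\dots,n_S$ and note that the left-hand quantity of (\ref{eq:mhms0}) is, by the multinomial theorem applied to each factor, the sum over all ways of choosing, for each row $j$, a multiset of $n_j$ column indices, of the corresponding monomial in the $a_{jk}$; equivalently it is $\mathrm{per}$-like but weighted, and can be written as a sum over functions (or bipartite multigraphs) with prescribed out-degrees $n_j$ and in-degrees $n_k$. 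The cleanest route, which I would take, is the analytic one: expand $1/\det(\mathrm{Id}-VA)$ using $\det(\mathrm{Id}-VA)=\exp\bigl(\mathrm{tr}\,\log(\mathrm{Id}-VA)\bigr)=\exp\bigl(-\sum_{m\ge 1}\tfrac1m\,\mathrm{tr}(VA)^m\bigr)$, so that
\begin{equation}\label{eq:expform}
f(x_1,\dots,x_S)=\exp\!\Bigl(\sum_{m\ge 1}\tfrac1m\,\mathrm{tr}(VA)^m\Bigr),
\end{equation}
valid as a formal power series in the $x_i$ since $VA$ has entries in the maximal ideal. The entries of $(VA)^m$ are sums over closed walks of length $m$ in the complete digraph on $\{1,\dots,S\}$, each walk contributing $x_{i_1}a_{i_1 i_2}x_{i_2}a_{i_2 i_3}\cdots x_{i_m}a_{i_m i_1}$; taking the trace and dividing by $m$ precisely removes the cyclic overcounting, so $\sum_{m}\tfrac1m\mathrm{tr}(VA)^m$ is the generating function for nonempty cyclic words, and its exponential is the generating function for multisets of such cyclic words — that is, for $2$-regular-type decompositions, i.e. permutations of multisets.

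Next I would identify the coefficient of $x_1^{n_1}\cdots x_S^{n_S}$ in (\ref{eq:expform}). Expanding the exponential, a term corresponds to a finite multiset of cyclic words on the alphabet $\{1,\dots,S\}$; concatenating these cycles recovers exactly a permutation $\pi$ of a multiset in which the symbol $i$ occurs $n_i$ times, with weight $\prod a_{i\,\pi(i)}$ where the product runs over the $N$ positions. Summing over all such multiset permutations yields precisely the same sum over bipartite multigraphs with out-degrees and in-degrees $n_1,\dots,n_S$ that we obtained from the product (\ref{eq:mhms0}); the key bookkeeping identity here is that the number of ways to partition a given permutation into its cycles is $1$, so the $1/m!$ from $\exp$ and the $1/m$ inside cancel against the choices of starting points and orderings of the cycles. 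This equality of the two combinatorial sums is the content of the theorem.

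The main obstacle — and the step I would spend the most care on — is making the last paragraph rigorous as an \emph{identity of formal power series with the prescribed exponents}, rather than a heuristic about cycles: one must check that the cyclic-word generating function, its exponential, and the multinomial expansion of (\ref{eq:mhms0}) are all genuinely well-defined in $\CC[[x_1,\dots,x_S]]$ (each monomial $x_1^{n_1}\cdots x_S^{n_S}$ receiving contributions from only finitely many terms because the total degree $n_1+\cdots+n_S$ bounds the total length of all cycles), and that the cancellation of the combinatorial factors is exact. Two ways to finesse this: (i) prove the identity first over the polynomial ring $\CC[a_{jk}]$ by the bijective/sign-free argument above, where everything is manifestly finite; or (ii) cite the classical proof via the Lagrange–Good inversion formula, deducing it from the scalar Lagrange inversion applied coordinatewise to $y_i = x_i\sum_k a_{ik}y_k/(\text{something})$ — but since the paper only needs the statement, I would present the short self-contained combinatorial version and remark that analytic convergence for small $|x_i|$ then follows because the series is dominated by $1/\det(\mathrm{Id}-VA)$ with $|a_{jk}|$ replaced by their absolute values. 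A reference-only proof (``see \cite{McMh16} or any modern treatment'') is also acceptable here, mirroring the treatment of Theorem~\ref{thm:mBez} above, if space is tight.
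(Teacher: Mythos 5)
The paper does not actually prove this theorem: it is quoted as a classical result of MacMahon with the citation to \cite{McMh16}, exactly as you anticipate in your closing remark (``a reference-only proof\dots is also acceptable here''). So the comparison is with the classical literature rather than with anything in the text. Your route --- writing $1/\det(\mathrm{Id}-VA)=\exp\bigl(\sum_{m\ge1}\tfrac1m\,\mathrm{tr}(VA)^m\bigr)$ and matching the cycle decomposition of multiset permutations against the multinomial expansion of \mbox{$\prod_j(\sum_k a_{jk}x_k)^{n_j}$} --- is the standard combinatorial proof (essentially Foata's), and it is a sound plan. Everything is legitimately a formal-power-series computation since $VA$ has entries in the ideal $(x_1,\dots,x_S)$, and each monomial $x_1^{n_1}\cdots x_S^{n_S}$ receives only finitely many contributions.

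The one step that is genuinely incomplete, and which you correctly identify as the delicate one, is the cancellation of the $1/k!$ and $1/m$ factors. Two specific points need repair. First, the assertion that $\tfrac1m\,\mathrm{tr}(VA)^m$ ``precisely removes the cyclic overcounting'' and is therefore ``the generating function for nonempty cyclic words'' is not literally true: a periodic closed walk of length $m$ and primitive period $d<m$ has only $d$ distinct rotations, so such a cyclic word is counted with weight $d/m=1/|\mathrm{stab}|$ rather than $1$. This fractional weight is in fact exactly what the exponential formula needs, but the argument must say so rather than claim the weight is $1$. Second, the matching with multiset permutations only becomes clean if you lift the letters to $N=n_1+\cdots+n_S$ distinguishable positions (with $n_i$ positions carrying letter $i$): then every permutation of the positions decomposes uniquely into cycles, each cycle of length $m$ has exactly $m$ pointings, distinct cycles are genuinely distinct objects so an unordered $k$-set of them has exactly $k!$ orderings, and the stabilizer subtlety above is absorbed by the count of ways to place letters onto positions. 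Carrying this out (or, alternatively, invoking the symmetric-function identity $\sum_{n\ge0}h_n=\exp\bigl(\sum_m p_m/m\bigr)$ applied to $VA$, which packages the same bookkeeping) would close the gap; as written, the central identity is asserted rather than proved.
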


For the context of the TMNE system (\ref{eq:tmne1})--(\ref{eq:tmnes}), 
% $K=S$, $a_{ii}=0$, and $a_{ij}=1$ for $i\neq j$.  %% $a_{ij}=1-\delta_{ij}$.
the matrix $A$ is determined by Corollary \ref{eq:tmneb}:
\begin{equation}
A=\left( \begin{array}{ccccc}
0 & 1 & 1 & \cdots & 1 \\
1 & 0 & 1 & \cdots & 1 \\
1 & 1 & 0 & \cdots & 1 \\
\vdots & \vdots & \ddots & \ddots & \vdots \\
1 & 1 & \cdots & 1 & 0
\end{array} \right).
\end{equation}
Let $M$ denote the matrix $(\mbox{\rm Id}-VA)$. We have
\[
M=\left( \begin{array}{ccccc}
1 & -x_1 & -x_1 & \cdots & -x_1 \\
-x_2 & 1 & -x_2 & \cdots & -x_2 \\
-x_3 & -x_3 & 1 & \cdots & -x_3 \\
\vdots & \vdots & \ddots & \ddots & \vdots \\
-x_S  & -x_S & \cdots & -x_S & 1
\end{array} \right).
\]
The function in (\ref{eq:mhms}) is then the generating function for the numbers $E(n_1,\ldots,n_S)$. 
To prove the generating function (\ref{eq:fcard}), we just have to compute the determinant.
\begin{lemma} 
Let $\sigma_1,\sigma_2,\ldots,\sigma_S$ be the elementary 
symmetric polynomials, as in $(\ref{eq:sigm})$. Then
\[
\det M =1-\sigma_2-2\sigma_3-\ldots-(S-1)\sigma_S.
\]
\end{lemma}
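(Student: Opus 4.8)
The plan is to recognize $M=\operatorname{Id}-VA$ as a rank-one perturbation of a diagonal matrix, apply the matrix determinant lemma, and then finish with a standard identity among elementary symmetric polynomials.

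First I would write $A=J-\operatorname{Id}$, where $J$ denotes the $S\times S$ all-ones matrix, so that
\[
M=\operatorname{Id}-V(J-\operatorname{Id})=(\operatorname{Id}+V)-(V\mathbf 1)\,\mathbf 1^{\mathsf T},
\]
where $\mathbf 1=(1,\dots,1)^{\mathsf T}$ and $V\mathbf 1=(x_1,\dots,x_S)^{\mathsf T}$. Since $\operatorname{Id}+V$ is the diagonal matrix with entries $1+x_j$, its adjugate is diagonal with entries $\prod_{i\neq j}(1+x_i)$. Using the matrix determinant lemma in its polynomial (division-free) form, $\det(D+uv^{\mathsf T})=\det D+v^{\mathsf T}\operatorname{adj}(D)\,u$, I get
\[
\det M=\prod_{j=1}^S(1+x_j)\;-\;\sum_{j=1}^S x_j\prod_{i\neq j}(1+x_i).
\]

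It then remains to expand the right-hand side in elementary symmetric polynomials. The first term is $\prod_{j=1}^S(1+x_j)=\sum_{k=0}^S\sigma_k$ with the convention $\sigma_0=1$. For the second I would use the identity $\sum_{j=1}^S x_j\prod_{i\neq j}(1+x_i)=\sum_{k=1}^S k\,\sigma_k$, which holds because $\prod_{i\neq j}(1+x_i)$ is the sum of the elementary symmetric polynomials in $\{x_i:i\neq j\}$, so that, after multiplying by $x_j$ and summing over $j$, every monomial of $\sigma_k$ arises exactly $k$ times --- once for each of its $k$ variables taken as the ``pulled-out'' factor $x_j$. Subtracting, the $\sigma_0$ and $\sigma_1$ contributions combine to $1$, while each $\sigma_k$ with $k\geq 2$ acquires coefficient $1-k$, which gives $\det M=1-\sigma_2-2\sigma_3-\dots-(S-1)\sigma_S$.

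I do not expect a genuine obstacle; the one point needing care is to run the matrix determinant lemma in its adjugate form, so that no division by a possibly vanishing $1+x_j$ takes place and the whole argument remains a polynomial identity in $x_1,\dots,x_S$. (An alternative is a direct induction on $S$ by cofactor expansion along the last row of $M$, but that is messier than the rank-one computation above.)
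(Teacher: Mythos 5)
Your proof is correct, but it follows a genuinely different route from the paper's. The paper first observes that $\det M$ is symmetric in $x_1,\dots,x_S$ and at most linear in each variable, hence a priori a linear combination of $\sigma_0,\sigma_1,\dots,\sigma_S$; it then recovers the coefficients from the diagonal specialization $x_1=\dots=x_S=1/\lambda$, where $\det M=\lambda^{-S}\det(\lambda-A)$ is read off from the spectrum of $A$ (eigenvalue $-1$ with multiplicity $S-1$ and eigenvalue $S-1$), giving $(1+x_1)^{S-1}\bigl(1-(S-1)x_1\bigr)=\sum_j(1-j)\binom{S}{j}x_1^j$ and hence $\sum_j(1-j)\sigma_j$. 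You instead compute the determinant outright: writing $M=(\operatorname{Id}+V)-(V\mathbf 1)\mathbf 1^{\mathsf T}$ and applying the matrix determinant lemma in its adjugate (division-free) form yields the explicit polynomial $\prod_j(1+x_j)-\sum_j x_j\prod_{i\neq j}(1+x_i)$, which you then expand via the standard identities $\prod_j(1+x_j)=\sum_k\sigma_k$ and $\sum_j x_j\prod_{i\neq j}(1+x_i)=\sum_k k\,\sigma_k$; all steps check out, including the counting argument that each monomial of $\sigma_k$ is produced $k$ times. The trade-off: the paper's symmetry-plus-specialization argument is shorter once one accepts that $\det(\operatorname{Id}-VA)$ is symmetric under simultaneous permutation of indices, and it recycles the eigenvalue computation for $A$; your rank-one computation needs no symmetry observation and produces the closed form of $\det M$ directly as a polynomial identity, which is arguably more self-contained. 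Your care in using the adjugate form to avoid dividing by a possibly vanishing $1+x_j$ is exactly the right precaution.
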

\begin{proof}
The determinant is a symmetric function of $x_1,x_2,\ldots,x_S$, % of total degree at most $S$, and
at most linear in each variable. Hence it is a linear combination of 
% the elementary  symmetric polynomials 
$\sigma_0=1$ and $\sigma_1,\sigma_2,\ldots,\sigma_S$.
The linear combination can be recovered from the diagonal specialization
$x_1=x_2=\ldots=x_S$.
If we set all variables equal to $1/\lambda$, 
\[
\det M=\frac1{\lambda^S} \,\det(\lambda\;\mbox{\rm Id}-A).
\]
Here $\det(\lambda\;\mbox{\rm Id}-A)$ is the characteristic polynomial of $A$.
The rank of $(\mbox{\rm Id}+A)$ equals 1, hence $\lambda=-1$ is an eigenvalue of $A$ 
with the multiplicity $S-1$. Other eigenvalue is $\lambda=S-1$, with an eigenvector consisting of all 1's.
Hence $\det(\lambda\;\mbox{\rm Id}-A)=(\lambda+1)^{S-1}\,(\lambda-S+1)$ and
\begin{equation} \label{eq:mev}
\det M=(1+x_1)^{S-1}\,\left(1-(S-1)x_1\right)=\sum_{j=0}^S(1-j){S\choose j}x_1^j
\end{equation}
when $x_1=x_2=\ldots=x_S$. % the diagonal specialization. 
For each $j\in\Omega$, %\{1,2,\ldots,S\}$, 
the term with $x_1^j$ represents ${S\choose j}$ summands of $\sigma_j$. 
Without the diagonal specialization, {$\det M=\sum_{j}(1-j)\sigma_j$} as claimed.
\end{proof}

The following variation will be used in \S \ref{sec:asympt}.
\begin{lemma} \label{th:edet}
$ \displaystyle
\det \left( \begin{array}{cccc}
1+x_1 & 1 & \cdots & 1 \\
1 & 1+x_2 & \cdots & 1 \\
\vdots & \vdots &  \ddots & \vdots \\
1  &1 & \cdots & 1+x_T
\end{array} \right) =\sigma_T+\sigma_{T-1}.
$
\end{lemma}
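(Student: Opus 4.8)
The plan is to recognize the matrix as a rank-one update of a diagonal matrix and apply the matrix determinant lemma (the determinant form of the Sherman--Morrison identity). Writing $D=\mathrm{diag}(x_1,\ldots,x_T)$ and $\mathbf{1}=(1,1,\ldots,1)^{\top}$, the matrix in the statement is precisely $D+\mathbf{1}\mathbf{1}^{\top}$. First I would work on the Zariski-dense open set where all $x_i\neq 0$, so that $D$ is invertible; since both sides of the asserted identity are polynomials in $x_1,\ldots,x_T$, an equality valid there is an equality everywhere. On that set, $\det\!\big(D+\mathbf{1}\mathbf{1}^{\top}\big)=\det(D)\,\big(1+\mathbf{1}^{\top}D^{-1}\mathbf{1}\big)=\det(D)+\det(D)\sum_{i=1}^{T}x_i^{-1}$. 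Then $\det(D)=\prod_{i=1}^{T}x_i=\sigma_T$ and $\det(D)\sum_{i=1}^{T}x_i^{-1}=\sum_{i=1}^{T}\prod_{j\neq i}x_j=\sigma_{T-1}$, which is the claim.

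As an alternative route, staying close in spirit to the previous lemma: the determinant is a symmetric function of $x_1,\ldots,x_T$ that is at most linear in each variable (each $x_i$ occurs only in the $(i,i)$ entry), hence a linear combination $\sum_{j=0}^{T}c_j\sigma_j$. One pins down the coefficients by specialization. Setting $x_{j+1}=\cdots=x_T=0$ leaves a matrix with $T-j$ identical all-ones rows, so its determinant vanishes whenever $T-j\ge 2$; this forces $c_j=0$ for all $j\le T-2$ (in particular $c_0=0$). The top-degree monomial $x_1\cdots x_T$ arises only from the diagonal product $\prod_i(1+x_i)$, giving $c_T=1$; and setting $x_T=0$ and subtracting the last row from each of the others exhibits the coefficient of $x_1\cdots x_{T-1}$ as $1$, giving $c_{T-1}=1$. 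Hence $\det=\sigma_{T-1}+\sigma_T$.

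Neither route contains a genuine obstacle. The only point deserving a word of care is the passage in the first approach from the generic identity (all $x_i\neq 0$) to the full polynomial identity, together with a quick check of the base case $T=1$, where $\det=1+x_1=\sigma_0+\sigma_1$ matches $\sigma_{T-1}+\sigma_T$; I expect to state this rather than belabor it.
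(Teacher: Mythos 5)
Your proposal is correct; your primary route, however, is genuinely different from the paper's. The paper simply reuses the argument of the preceding lemma: the determinant is a symmetric function of $x_1,\ldots,x_T$, at most linear in each variable, hence a combination $\sum_j c_j\sigma_j$, and the coefficients are read off from the single diagonal specialization $x_1=\cdots=x_T=y$, where the matrix becomes $yI+J$ ($J$ the all-ones matrix) with determinant $y^{T-1}(y+T)=y^T+Ty^{T-1}$; matching against $\sum_j c_j\binom{T}{j}y^j$ forces $c_T=c_{T-1}=1$ and all other $c_j=0$. Your first argument instead recognizes the matrix as the rank-one update $D+\mathbf{1}\mathbf{1}^{\top}$ and invokes the matrix determinant lemma, which produces $\sigma_T+\sigma_{T-1}$ in one line without using symmetry at all, at the small cost of the Zariski-density step to remove the auxiliary hypothesis $x_i\neq0$ (which you handle correctly). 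Your second argument is the paper's strategy with a different choice of specializations --- setting trailing variables to $0$ and extracting two leading coefficients, rather than one diagonal substitution --- and it pins down the coefficients just as well, if slightly more laboriously. All the computations in both of your routes check out, including the degenerate case $T=1$.
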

\begin{proof}
After the specialization $x_1=x_2=\ldots=x_T=y$, 
the determinant equals $y^{T-1}(y+T)$.
\end{proof}

To relate the product in (\ref{eq:mbx}) to the card recreation, 
we write it down as a product of $N$ terms
\[
(x_2+x_3+\ldots+x_{S}) \,\cdots\, (x_1+x_2+\ldots+x_{S-1}).
\]
Let the first term represent the players to which the first card of the first player could be dealt, and so on.
Expansion of the product gives the generating function for the number of ways the cards could be dealt
(in variable quantities to the players) so that no player receives a card of his own. 
The coefficient to $x_1^{n_1}\cdots x_S^{n_S}$ gives the McKelvey-McLennan count.

In \cite{McKL97}, the combinatorial count was derived using the more general BKK bound.
As detailed in \cite{EV14}, the BKK bound is defined in terms of {\em mixed volumes},
and is related to the multihomogeneous B\'ezout bound via matrix {\em permanents} \cite{Wiki}.
The supplementing article \cite{EV14} explores application of MacMahon's master theorem
to BKK bounds of more general algebraic systems.

Generally, not all complex solutions of  (\ref{eq:tmne1})--(\ref{eq:tmnes})
give proper real solutions defining TMNE. But \cite[\S 4]{McKL97} gives families of games 
where the number of TMNE achieves the BKK % Bezout-Bernstein 
bound. In \cite{JPS09}, the systems 
% \mbox{(\ref{eq:tmne1})--(\ref{eq:tmnes})} 
whose all roots represent TMNE are parametrized. 

\section{Special cases}
\label{sec:special}

Here we summarize known explicit results about the numbers $E(n_1,\ldots,n_S)$ counting
block derangements, and give a complete set of hypergeometric formulas for the case $S=3$. 

For $S=2$ players, $F_{\rm M}(x_1,x_2)=1/(1-x_1x_2)$. This is consistent with 
the orthogonality of Laguerre polynomials, and with the fact 
that there is at most one TMNE % totally mixed Nash equilibrium 
in generic % (or regular \cite{McKL97})
games of two players each with $m_1=m_2$ choices, 
while there are generically no TMNE %totally mixed Nash equilibria 
if $m_1\neq m_2$. 
The system (\ref{eq:tmne1})--(\ref{eq:tmnep}) is linear when $S=2$,
and has an over-determined subsystem if additionally $m_1\neq m_2$.

Similarly, $E(n_1,n_2,\ldots,n_S)=0$ for $n_1>n_2+\ldots+n_S$
both by the McKelvey-McLennan count and 
in the generating function (as the denominator has no linear terms).
In the context of Laguerre polynomials, this reflects orthogonality of $L_{n_1}(z)$
to the product $L_{n_2}(z)\cdots L_{n_S}(z)$ of lower degree.
For \mbox{$n_1=n_2+\ldots+n_S$}, we count dealings of $n_1$ cards of the first player to the others: 
\begin{equation}
E(n_1,n_2,\ldots,n_S)=\frac{n_1!}{n_2!n_3!\cdots n_S!}.
\end{equation}

Table 1.1 in \cite{McKL97} gives some numbers $E(n,n,\ldots,n)$.
A few columns and rows of this table appear in Sloan's encyclopedia of integer sequences
\cite{Sloan}. In particular,
\begin{itemize}
\item $E(1,1,\ldots,1)$ equals the number of {\em derangements} of $S$ elements 
(OEIS sequence A000166),  i.e.,  the number of permutations without fixed elements. 
%This is clear from the McKelvey-McLennan description.
We have
\begin{equation}
E(1,1,\ldots,1)=S! \, \sum_{j=0}^S \frac{(-1)^j}{j!}.
\end{equation}
This is the maximal number of TMNE %totally mixed Nash equilibria 
for $S$ players when each has 2 options.
\item $E(2,2,\ldots,2)$, $E(3,3,\ldots,3)$, $E(4,4,\ldots,4)$, $E(5,5,\ldots,5)$ 
are the {\em card-matching numbers}  (or {\em dinner-dinner matching numbers}) 
defined by following the McKelvey-McLennan description.
These are the OEIS sequences A000459 (alias A059072), A059073, A059074 and A123297.
\item $E(n,n,n)$ are the {\em Franel numbers} (OEIS sequence A000172):
\begin{equation} \label{eq:franel}
E(n,n,n)=\sum_{i=0}^{n} {n \choose i}^{\!3}.
% =\sum_{i=\lceil m/2\rceil}^m {m \choose i}^{\!2}{2i \choose m}
\end{equation}
In the context of Laguerre polynomials, this was noticed by Askey; 
see end comment \refpart{b} in \cite{EG76}. 
\end{itemize}

The linearization problem for orthogonal polynomials is substantially  solved 
when  products of two polynomials are linearized. 
Hence the numbers $E(n_1,n_2,n_3)$ are of foremost interest. 
For $S=4$, a linearization reduction leads to
\begin{equation} \label{eq:opsum}
E(n_1,n_2,n_3,n_4)=\sum_{k} E(k,n_1,n_2)\,E(k,n_3,n_4),
\end{equation}
as asserted in \cite[(28)]{EG76}.
More generally,
\begin{equation}
E(n_1,\ldots,n_S,m_1,\ldots,m_T)=\sum_{k} E(k,n_1,\ldots,n_S)E(k,m_1,\ldots,m_T).
\end{equation}
For computational purposes, the integral representation (\ref{eq:linc1})--(\ref{eq:linc2})
gives an effective, polynomial time algorithm to get the $E$-numbers.

Hypergeometric $\hpgo32(\pm1)$ expressions for $E(n_1,n_2,n_3)$ are known \cite{EG76}.
Recall the definition of hypergeometric series:
\begin{equation}
\hpg32{\alpha,\beta,\gamma}{\zeta,\,\eta}{z} = \sum_{n=0}^\infty
\frac{(\alpha)_n(\beta)_n(\gamma)_n}{(\zeta)_n(\eta)_n\;n!}z^n,
% 1+\frac{\alpha\beta\gamma}{\zeta\eta\;1!}z
% +\frac{\alpha(\alpha+1)\,\beta(\beta+1)\,\gamma(\gamma+1)}{\zeta(\zeta+1)\;\eta(\eta+1)\;2!}z^2+\ldots.
\end{equation}
% The coefficients are products of Pochhammer symbols 
where $(\alpha)_n=\alpha(\alpha+1)(\alpha+2)\cdots(\alpha+n-1)$ is the rising factorial. 
The hypergeometric series terminates if (at least) one of the parameters $\alpha,\beta,\gamma$
is zero or a negative integer. It is not well defined if $\zeta$ or $\eta$ is zero or a negative integer,
unless some of $\alpha,\beta,\gamma$ are such integers closer to $0$.
For cleaner formulas, we change the notation from $(n_1,n_2,n_3)$ to $(a,b,c)$, 
and set
\begin{equation}  \label{eq:pqrdef}
p=\frac{a+b+c}2, \qquad q=\frac{a+b+c-1}2, \qquad r=\left\lfloor \frac{a+b+c}2 \right\rfloor.
\end{equation}
For nonzero $E(a,b,c)$, the triangle inequalities $a\le b+c$, $b\le a+c$, $c\le a+b$ must be satisfied.
This is equivalent to $r\ge\max(a,b,c)$. 
\begin{theorem}
If $c=\max(a,b,c)$, then
\begin{align} \label{eq:bf0} 
E(a,b,c) &= \sum_{k=0}^{a+b-c} %{\min(a,b)} 
{a\choose k}{b\choose c-a+k}{c\choose b-k} \\ \label{eq:f0} 
&= \frac{c!}{(a+b-c)!(c-a)!(c-b)!}\,\hpg32{c-a-b,-a,-b}{c-a+1,c-b+1}{\!-1} \\  \label{eq:f1} 
&= \frac{2^{a+b-c}\,c!}{(a+b-c)!(c-a)!(c-b)!}\,\hpg32{c-p,\,c-q,\,c+1}{c-a+1,c-b+1}{1}. % F1
\end{align}
\end{theorem}
\begin{proof} These are the formulas (35)--(37) in \cite{EG76}. 
Note that the first formula directly specializes to the Franel numbers (\ref{eq:franel}).
The second formula is the same sum in a hypergeometric form.
Formulas (\ref{eq:f0}) and (\ref{eq:f1}) are related by
Whipple's quadratic transformation \cite[\S 7]{Whp27}:
\begin{equation*}
\hpg32{\alpha,\,\beta,\,\gamma}{\!1\!+\!\alpha\!-\!\beta,1\!+\!\alpha\!-\!\gamma}{z} = (1-z)^{-\alpha} 
\hpg32{\!\frac{\alpha}2,\frac{\alpha+1}2,1\!+\!\alpha\!-\!\beta\!-\!\gamma}
{1\!+\!\alpha\!-\!\beta,1\!+\!\alpha\!-\!\gamma}{\!\frac{-4z}{(1-z)^2}}\!.
\end{equation*}
To prove the theorem independently from \cite{EG76}, it is thus enough to show (\ref{eq:f1}).
We use the generating function:
\begin{align*}
F_{\rm M}(x,y,z) & =\frac{1}{1-(xy+xz+yz+2xyz)}
% =\sum_{n=0}^{\infty} \big(xy+xz+yz+2xyz\big)^{\!n} \\ & 
=\sum_{n=0}^{\infty} x^ny^nz^n\left(2+\frac1x+\frac1y+\frac1z\,\right)^{\!n} \\
& = \sum_{n=0}^{\infty} \, \sum_{a,b,c\ge0}^{a+b+c\le n} % \sum_{i=0}^n\sum_{j=0}^{n-i}\sum_{k=0}^{n-i-j}
\frac{n!\;2^{n-a-b-c}}{a!\,b!\,c!\,(n\!-\!a\!-\!b\!-\!c)!}\,x^{n-a}y^{n-b}z^{n-c} \\
& = \sum_{n=0}^{\infty} \; \sum_{a+b+c\ge 2n} ^{a,b,c\le n}
\frac{n!\;2^{a+b+c-2n}\;x^{a}\,y^{b}\,z^{c}}{(n-a)! (n-b)! (n-c)! (a+b+c-2n)!} \\
& = \sum_{a,b,c=0} ^{\infty} \; \sum_{n=\max(a,b,c)}^{r} % \lfloor (a+b+c)/2 \rfloor} 
\frac{n!\;2^{a+b+c-2n}\;x^{a}\,y^{b}\,z^{c}}{(n-a)! (n-b)! (n-c)! (a+b+c-2n)!}.
\end{align*}
% Shifting the inner summation index $n$ by $c=\max(a,b,c)$ gives (\ref{eq:f1}). 
The upper summation limit $r$ is defined in (\ref{eq:pqrdef}).
We assumed $c=\max(a,b,c)$.
Shifting the inner summation index $n$ by $c$ gives (\ref{eq:f1}).
\end{proof}

More $\hpgo32(1)$ expressions for $E(a,b,c)$ are obtained by using Whipple's
group of transformations for $\hpgo32(1)$ series \cite{Whp24}. 
Whipple defined an orbit 120 {\em allied} closely related general $\hpgo32(1)$ series,
analogous to the 24 Kummer's $\hpgo21(z)$ functions. 
When terminating \cite{RJRJ97} or ill-defined $\hpgo32(1)$ sums are involved, 
Whipple's relations between the 120 allied functions degenerate. 
Starting from (\ref{eq:f1}), we obtain both ill-defined
and terminating (of various length) $\hpgo32(1)$ sums.

The most interesting new expressions for $E(a,b,c)$ are summarized in the following theorem.
The terminating sums have $a,b,c,r-a,r-b$ or $r-c$ terms.  
In particular, formula (\ref{eq:f6}) has $\min(p-b,p-c)$ terms for even $a+b+c$,
while $a$ terms for odd $a+b+c$. 
The similar formula  (\ref{eq:f5b}) has $p-c$ terms for even $a+b+c$, 
but is undefined for odd $a+b+c$.
% Similarly, formula (\ref{eq:f7}) has $\min(q-c,q-b)$ 
% terms for odd $a+b+c$, while $a$ terms for even $a+b+c$. 
Other terminating $\hpgo32(1)$ expressions are obtained by rewriting 
the presented sums in the reverse order. 
For example, formula (\ref{eq:f1}) is rewritten as follows, for even and odd $a+b+c$ respectively:
\begin{align} \label{eq:f1a}
E(a,b,c) &=\frac{p!}{(p-a)!(p-b)!(p-c)!}\,\hpg32{a-p,b-p,c-p}{-p,\;\frac12}{1} \\
% \qquad \mbox{(if $a+b+c$ even)} \\
\label{eq:f1b} &=\frac{2\cdot q!}{(q-a)!(q-b)!(q-c)!}\,\hpg32{a-q,b-q,c-q}{-q,\;\frac32}{1}. 
% \qquad \mbox{(if $a+b+c$ odd)}.
\end{align} 
\begin{theorem}
The following terminating $\hpgo32(1)$ expressions hold:
\begin{align} \label{eq:strehl}
E(a,b,c)&=  {c\choose b}{2b\choose a+b-c}\,\hpg32{\!c-p,\,c-q,-b}{c-b+1,\,\frac12-b}{1} 
\qquad \mbox{(if $c\ge b$)} \\ %F3 Strehl
 \label{eq:sun} &= \frac{2^{a+b+c}\,(\frac12)_a\,(\frac12)^{}_b\;c!}{(a+b-c)!(a-b+c)!(b-a+c)!}
\,\hpg32{\!c-p,\,c-q,\frac12}{\frac12-a,\,\frac12-b}{1} \\  % F2 Zhi-Wei Sun (2011)
\label{eq:f8} &= \frac{(a+b+c)!}{(a+b-c)!(a-b+c)!(b-a+c)!}\,\hpg32{\!-a,-b,-c}{-p,-q}{1} \\  % F8
\label{eq:f6} &= {p\choose a}{2a\choose a+b-c}\,\hpg32{\!-a,\,c-p,\,b-p}{-p,\,\frac12-a}{1} \\ % F6 
\label{eq:f7} &= {q\choose a}{2a\choose a+b-c}\,\hpg32{\!-a,\,c-q,\,b-q}{-q,\,\frac12-a}{1}. % F7
\end{align}
For even $a+b+c$:
\begin{align}  
\label{eq:f4a} E(a,b,c)
&= {2a \choose a+b-c}\frac{b!\,c!}{a!\,(p-a)!^2}\,\hpg32{c-p,\,b-p,\,\frac12}{p-a+1,\,\frac12-a}{1} \\ % F4 
 \label{eq:f5b} &= \frac{(-1)^{p-c}\,p!}{(p-a)!(p-b)!(p-c)!}\,\hpg32{c-p,-a.-b}{-p,\,c-q+1}{1}. % F5
\end{align}
For odd $a+b+c$:
\begin{align}
\label{eq:f4b} E(a,b,c)
&= {2a \choose a+b-c}\frac{b!\,c!}{a!\,(q-a)!(q-a+1)!}\,\hpg32{c-q,b-q,\frac12}{\!q-a+1,\frac12-a}{1} \\ % F5 
\label{eq:f5a} &= \frac{(-1)^{q-c}\,q!}{(p-a)(q-a)!(q-b)!(q-c)!}\,\hpg32{c-q,-a.-b}{-q,\,c-p+1}{1}. % F4
\end{align}
\end{theorem}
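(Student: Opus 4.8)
The plan is to produce every identity in the theorem by transporting the already established formula (\ref{eq:f1}) around the orbit of $120$ ``allied'' $\hpgo32(1)$ series attached to it, in the sense of Whipple \cite{Whp24}. Formula (\ref{eq:f1}) presents $E(a,b,c)$, for $c=\max(a,b,c)$, as a terminating $\hpgo32(1)$ with numerator parameters $c-p,\,c-q,\,c+1$ and denominator parameters $c-a+1,\,c-b+1$; its parametric excess $(c-a+1)+(c-b+1)-(c-p)-(c-q)-(c+1)$ equals $\tfrac12$, so none of the classical summation theorems apply, but Thomae's two- and three-term relations still generate the full orbit of allied functions, each equal to (\ref{eq:f1}) up to an explicit ratio of gamma values. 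Applying that machine, and additionally reversing the order of summation in each terminating member (this is what already turns (\ref{eq:f1}) into the $(a,b,c)$-symmetric companions (\ref{eq:f1a})--(\ref{eq:f1b})), sweeps out a finite list of candidate expressions; the admissible ones --- those whose series truncates before meeting a pole produced by a denominator parameter --- are exactly (\ref{eq:strehl}), (\ref{eq:sun}), (\ref{eq:f8}), (\ref{eq:f6}), (\ref{eq:f7}) for all $a+b+c$, and (\ref{eq:f4a}), (\ref{eq:f5b}) or (\ref{eq:f4b}), (\ref{eq:f5a}) according to the parity of $a+b+c$.

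First I would set up the bookkeeping that forces the even/odd split. Since $c-p=\tfrac12(c-a-b)$ and $c-q=\tfrac12(c-a-b+1)$ differ by $\tfrac12$, exactly one of them is a non-positive integer and the other a negative half-integer, and which one is which is decided by the parity of $a+b+c$; this is precisely why the integer $p$ and its shifts occur in (\ref{eq:f4a})--(\ref{eq:f5b}) while the integer $q$ and its shifts occur in (\ref{eq:f4b})--(\ref{eq:f5a}). For each allied member I would then (i) locate the numerator parameter --- one of $-a,-b,-c$ or $a-r,b-r,c-r$ with $r=\lfloor(a+b+c)/2\rfloor$ --- that truncates the sum, which reads off the announced lengths $a,b,c,r-a,r-b,r-c$, and for members with two competing non-positive integers the shorter one wins (this is where the $\min(p-b,p-c)$ in (\ref{eq:f6}) comes from); and (ii) rewrite the quotient of gamma values delivered by Thomae's relation as a product of factorials and Pochhammer symbols to recover the stated prefactors, such as $\binom{c}{b}\binom{2b}{a+b-c}$ in (\ref{eq:strehl}) or the power of $2$ and the $(\tfrac12)_a(\tfrac12)_b$ in (\ref{eq:sun}). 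The reversal $k\mapsto m-k$ of a terminating series of length $m+1$ produces another $\hpgo32(1)$ whose parameters are built from $m$ and the old ones, which supplies the ``reverse'' partners among the listed formulas.

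The main obstacle is that Whipple's $120$-function relations are stated for generic parameters, whereas here many allied members carry a non-positive integer in the \emph{denominator}. Reaching those members requires either a careful limiting argument or the dedicated machinery for terminating $\hpgo32(1)$ sums in \cite{RJRJ97}: one must verify, member by member, that the truncating numerator parameter is ``closer to $0$'' than every non-positive-integer denominator parameter --- the well-definedness condition recalled just after the definition of $\hpgo32(1)$ --- so that the degenerate relation still holds and its gamma prefactor stays finite and nonzero. This is exactly where a side condition such as ``$c\ge b$'' in (\ref{eq:strehl}) comes from, where (\ref{eq:f5b}) is admissible only for even $a+b+c$, and where candidate members that look formally available must be discarded. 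No single hard transformation is at stake; the real work is this finite but fiddly case analysis, kept consistent across the many sign, floor, and index conventions.

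Finally I would record two cheap consistency checks, valuable precisely because of that bookkeeping. At the boundary $c=a+b$ every series collapses to a single term (the parameter $c-p$ vanishes) and each prefactor must reduce to the boundary value $E(a,b,a+b)=\binom{a+b}{a}$ from \S\ref{sec:special}; and for $a=b=c$ each formula must return the Franel number $\sum_i\binom{a}{i}^{3}$ of (\ref{eq:franel}). Both checks are automatic once the derivation is correct, since every formula equals $E(a,b,c)$ by construction, but they are the quickest way to catch off-by-one and sign slips in the prefactors.
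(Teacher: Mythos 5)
Your proposal follows essentially the same route as the paper: starting from the established formula (\ref{eq:f1}), you transport it around Whipple's orbit of 120 allied $\hpgo32(1)$ series via the gamma-normalized two-term relations (the paper's $S_5$-invariant quantity (\ref{eq:wsym}), which is the same machine as Thomae's relations), supplement this with reversal of terminating sums, and carry out the case analysis of which allied members are well-defined terminating series — including the parity split governed by whether $c-p$ or $c-q$ is the non-positive integer. The paper organizes exactly this argument through the explicit identification of the six $r_j$'s and the accompanying table, so the two proofs coincide in substance.
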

\begin{proof}
Whipple's symmetries \cite{Whp24}  of $\hpgo32(1)$ functions are summarized as follows. 
Let $r_0,r_1,\ldots,r_5$ be six complex numbers that sum up to 0. 
Similarly to Whipple's notation $Fp(0;4,5)$, we introduce 
\begin{equation*}
%\frac{1}{\Gamma(1-r_0+r_4) \Gamma(1-r_0+r_5) \Gamma\!\left(2r_0+2r_4+2r_5-1\right)}\,
F^{+0}_{45}=
\hpg32{r_1+r_4+r_5+\frac12,r_2+r_4+r_5+\frac12,r_3+r_4+r_5+\frac12}{1-r_0+r_4,\;1-r_0+r_5}{1}.
\end{equation*}
For distinct $i,j,k\in\{0,1,\ldots,5\}$, let $F^{+i}_{jk}$ denote the function obtained 
by a corresponding permutation of the $r_j$'s ($r_0\mapsto r_i$, etc.) 
Let $F^{-i}_{jk}$ be the function obtained by multiplying all $r_j$'s by $-1$. 
In total, we have 120 allied functions $F^{+i}_{jk}$, $F^{-i}_{jk}$.
Any three of them are related by a linear relation. In particular, the function
\begin{equation} \label{eq:wsym}
\frac{F^{+0}_{45}}{\Gamma(1-r_0+r_4)\Gamma(1-r_0+r_5)\Gamma(r_1+r_2+r_3+\frac12)}
\end{equation}
is invariant under the permutations of $r_1,r_2,r_3,r_4,r_5$, generally. 
The same $S_5$-symmetry generally holds for other $F^{\pm i}_{jk}$. 
The six $r_j$'s are identified as
\begin{align}
(r_0,r_4,r_5) &=\left( \frac{a+b+c}{3}+\frac12,-\frac{a+b+c}{6}-\frac12, -\frac{a+b+c}{6} \right), \nonumber\\ 
(r_1,r_2,r_3) &=\left( \frac{b+c-2a}{3},\frac{a+c-2b}{3},\frac{a+b-2c}{3}\right).
\end{align}
Particularly,
\begin{align*} \textstyle
r_1+r_4+r_5+\frac12=-a,\quad r_1+r_2+r_4+\frac12=p-c, \quad 1-r_3+r_2=1+c-b,
\end{align*}
etc. Formulas (\ref{eq:strehl})--(\ref{eq:f5a}) are identified and proved by 
checking that the symmetries (\ref{eq:wsym}) hold (in continuous limit)
between well-defined terminating $F^{\pm i}_{jk}$ %, $F^{-i}_{jk}$ 
with any fixed upper parameter $\pm i$, and relating the functions with different $\pm i$
by reversal of terminating hypergeometric sums.

\begin{table}
\begin{tabular}{@{}c@{}cccc@{}} 
\\ \hline
Formula & Whipple & \multicolumn{2}{c}{Reversed sums} & Ill-defined \\
\cline{3-4} & functions & for $p\in\ZZ$ & for $q\in\ZZ$ & ``terminating" \\ \hline 
(\ref{eq:f1}) & $F^{+3}_{12}$ &  $F^{-4}_{05}$ (\ref{eq:f1a}) 
& $F^{-5}_{04}$ (\ref{eq:f1b})  & $F^{+1}_{23}$, $F^{+2}_{12}$ \\
(\ref{eq:strehl}) & $F^{-2}_{03}$,  $F^{-1}_{03}$, $F^{-1}_{02}$ & 
$F^{+5}_{14}$, $F^{+5}_{24}$, $F^{+5}_{34}$ &  $F^{+4}_{15}$, $F^{+4}_{25}$, $F^{+4}_{35}$
&  $F^{-3}_{02}$, $F^{-3}_{01}$, $F^{-2}_{01}$ \\
(\ref{eq:sun}) & $F^{+0}_{12}$,  $F^{+0}_{13}$, $F^{+0}_{23}$ & 
$F^{-4}_{35}$, $F^{-4}_{25}$, $F^{-4}_{15}$ &  $F^{-5}_{34}$, $F^{-5}_{24}$, $F^{-5}_{14}$ & --- \\
(\ref{eq:f8}) & $F^{+0}_{45}$ &  \multicolumn{2}{c}{$F^{-1}_{23}$} & $F^{-2}_{13}$, $F^{-3}_{12}$ \\
(\ref{eq:f6}) & $F^{+0}_{14}$,  $F^{+0}_{24}$, $F^{+0}_{34}$ & 
$F^{-2}_{35}$, $F^{-1}_{35}$, $F^{-1}_{25}$ &  $F^{-5}_{23}$, $F^{-5}_{13}$, $F^{-5}_{12}$ 
& $F^{-3}_{25}$, $F^{-3}_{15}$, $F^{-2}_{15}$ \\
(\ref{eq:f7}) & $F^{+0}_{15}$,  $F^{+0}_{25}$, $F^{+0}_{35}$ & 
$F^{-4}_{23}$, $F^{-4}_{13}$, $F^{-4}_{12}$ &  $F^{-2}_{34}$, $F^{-1}_{34}$, $F^{-1}_{24}$ 
& $F^{-3}_{24}$, $F^{-3}_{14}$, $F^{-2}_{14}$ \\
(\ref{eq:f4a}) & $F^{-1}_{05}$,  $F^{-2}_{05}$, $F^{-3}_{05}$ & 
$F^{+3}_{24}$, $F^{+3}_{14}$, $F^{+2}_{14}$ & --- & $F^{+2}_{34}$, $F^{+1}_{34}$, $F^{+1}_{24}$ \\
(\ref{eq:f4b}) & $F^{-1}_{04}$,  $F^{-2}_{04}$, $F^{-3}_{04}$ & --- & 
$F^{+3}_{25}$, $F^{+3}_{15}$, $F^{+2}_{15}$ & $F^{+2}_{35}$, $F^{+1}_{35}$, $F^{+1}_{25}$ \\
(\ref{eq:f5b}) & $F^{-4}_{03}$,  $F^{-4}_{02}$, $F^{-4}_{01}$ & 
$F^{+5}_{12}$, $F^{+5}_{13}$, $F^{+5}_{23}$ & --- & $F^{+2}_{35}\ldots,F^{+3}_{25}\ldots$ \\
(\ref{eq:f5a}) & $F^{-5}_{03}$,  $F^{-5}_{02}$, $F^{-5}_{01}$ & --- &
$F^{+4}_{12}$, $F^{+4}_{13}$, $F^{+4}_{23}$ & $F^{+2}_{34}\ldots,F^{+3}_{24}\ldots$ \\
\hline\\
\end{tabular}
\caption{The allied $\hpgo32(1)$ terminating sums}
\label{tb:whipple}
\end{table}

The whole picture of the allied $\hpgo32(1)$ sums with a non-negative upper parameter 
is given in Table \ref{tb:whipple}. For a concrete identification, we assume $a\le b\le c$. 
Some of  formulas (\ref{eq:strehl})--(\ref{eq:f5a})
are valid for several orderings of $(a,b,c)$, giving three functions in the second column.
The functions in the last column appear as formal reversal of hypergeometric sums
with several upper parameters equal to a non-negative integer, such as (\ref{eq:f1a})--(\ref{eq:f1b}).
The reversed sums have the opposite sign of $\pm i$, and the sets of three parameters 
are complementary. 
The ill-defined functions related to (\ref{eq:f5b}), (\ref{eq:f5a}) are related to  (\ref{eq:f4a}), (\ref{eq:f4b})
as well, but in the context of different parity of $a+b+c$.  
In total, we have 26 functions in the second column, 29 reversed sums, 
and 19 new ill-defined sums. % in this table. 
Of the remaining 36 allied functions, proper gamma-multiples 
of $F^{+4}_{05}$ and $F^{+5}_{04}$ converge to $E(a,b,c)$.
All 10 functions $F^{-0}_{jk}$ are well-defined convergent series,
though their value is apparently not related to $E(a,b,c)$. 
The other $\hpgo32(1)$ series are ill-defined or divergent,
including the non-terminating cases of (\ref{eq:f4a})--(\ref{eq:f5a}).
\end{proof}

Formulas (\ref{eq:strehl})--(\ref{eq:f8}) and (\ref{eq:f1}) specialize to
the following expressions for the Franel numbers, respectively:
\begin{align} \label{eq:franel1}
E(n,n,n)=\sum_{k=0}^{n} {n \choose k}^{\!3}
 &=\sum_{k=\lceil n/2\rceil}^n {n \choose k}^{\!2}{2k \choose n} \\ \label{eq:franel2}
 &=\frac{1}{2^n}\sum_{k=\lceil n/2\rceil}^n {2k \choose n}{2k \choose k}{2n-2k \choose n-k}
 \quad \\ \label{eq:franel3}
 &= \sum_{k=0}^{n} {n+2k \choose 3k}  {2k \choose k}  {3k \choose k} (-4)^{n-k} \\ \label{eq:franel4}
 & =\sum_{k=0}^{\lfloor n/2\rfloor} {n+k \choose 3k}  {2k \choose k}  {3k \choose k} \, 2^{n-2k}.
\end{align}
The expression in (\ref{eq:franel1}) was proved by Strehl \cite[(29)]{Str94},
while expressions (\ref{eq:franel2}), (\ref{eq:franel3}) were proved by Sun \cite{Sloan}, \cite{Sun13}. 

The symmetric case with 4 arguments is more complicated.
\begin{theorem} \label{eq:nnnn}
$\displaystyle E(n,n,n,n) = \sum_{i=0}^n \sum_{j=0}^n {2i+2j \choose 2i} {n\choose i}^{\!2} {n\choose j}^{\!2}.$
\end{theorem}
\begin{proof}
Formulas (\ref{eq:opsum}), (\ref{eq:strehl}) give $E(n,n,n,n)=\sum_{k=0}^{2n} E(k,n,n)^2$ and
% =\sum_{k=0}^{2n} E(2n-k,n,n)^2$.}% (after the reindexing $j\mapsto n-j$)
\[
E(k,n,n)={2n \choose k} \, \hpg32{-\frac{k}2,-\frac{k-1}2,-n}{1,\,\frac12-n}{1}
=\sum_{j=0}^{\lfloor k/2\rfloor} {n\choose j}^{\!2} {2n-2j\choose 2n-k}.
%=\sum_{j=n-\lfloor k/2\rfloor}^n {n\choose j}^{\!2} {2j\choose 2n-k}.
\] 
We reindex $j\mapsto n-j$ and then $k\mapsto 2n-k$ to get
\begin{align*}
E(2n-k,n,n) &= \sum_{j=\lceil k/2\rceil}^n {n\choose j}^{\!2} {2j\choose k}, \\
E(n,n,n,n) & = %\sum_{k=0}^{2n} E(2n-k,n,n)^2=
\sum_{i=0}^n \sum_{j=0}^n {n\choose i}^{\!2} {n\choose j}^{\!2} 
\;\, \sum_{k=0}^{\min(2i,2j)} {2i\choose k} {2j\choose k}.
\end{align*}
The inner-most sum equals $2i+2j\choose 2i$ by Chu-Vandermonde formula \cite{Wiki}.
\end{proof}

\section{Recurrence relations}
\label{sec:recr}

In \cite{EG76}, recurrence relations for the numbers $E(n_1,\ldots,n_S)$ are derived
from the three-term recurrence for Laguerre polynomials. 
Formulas (31), (32) in \cite{EG76} tell:
\begin{align}
E(1,a,b,c) = & \, (a+1)E(a+1,b,c)+2aE(a,b,c)  +aE(a-1,b,c), \\
2(b-a)E(a,b,c) = & \, (a+1)E(a+1,b,c)+aE(a-1,b,c) \nonumber \\ \label{eq:gillis5}
& -(b+1)E(a,b+1,c)-bE(a,b-1,c).
\end{align}
The argument $c$ can be replaced by any sequence of arguments. 
Formula (34) in \cite{EG76} is a 4-term linear recurrence for $E(a,b,c)$, 
but the $\hpgo32(1)$ expressions imply that any 3 values of $E(a,b,c)$ 
are related by a linear relation. The next theorem spells out basic
3-term relations for $E(a,b,c)$.

Generally,  compact recurrences can be obtained and proved by considering
partial differential equations for the generating function $F_{\rm M}(x_1,\ldots,x_n)$. % in (\ref{eq:fcard}). 
Let us introduce the differential operators 
\[
D_j=x_j\frac{\partial}{\partial x_j}, \qquad \mbox{for $j\in\Omega$.}
\]
Partial differential equations are represented by differential operators  in the Weyl algebra 
$\CC\langle x_1,\ldots,x_S;D_1,\ldots,D_S\rangle$, 
with the nontrivial commutation relations $D_jx_j=x_jD_j+x_j$.
Recurrences are represented by (negative) shift operators in the algebra 
$\CC\langle n_1,\ldots,n_S;T_1,\ldots,T_S \rangle$, 
with the nontrivial commutation relations $T_jn_j=(n_j-1)T_j$. 
The correspondence between the differential and shift operators is realized by the algebra
isomorphism $x_j\mapsto T_j$, $D_j\mapsto n_j$. To get recurrences with fewer terms, 
we look for differential operators (of any order) with few distinct $\CC[x_1,\ldots,x_S]$
monomials in the coefficients to products of $D_j$'s.

\begin{theorem}  \label{lm:rec}
The following recurrences hold:
\begin{align} \label{eq:rec3a}
%2(n_3-n_2)E &= (n_1+n_2-n_3+1) E(n_2+1)-(n_1-n_2+n_3+1) E(n_3+1),\\
& \hspace{-250pt} 2(a-b) E(a,b,c) + (a-b+c+1) E(a+1,b,c) \nonumber \\ 
 +(a-b-c-1)E(a,b+1,c) &=0,\\ \label{eq:rec3b}
& \hspace{-250pt} 2a\,E(a-1,b,c)+(a-b+c) E(a,b,c) \nonumber \\
 +(c-a-b-1)E(a,b+1,c) &=0, \\ \label{eq:rec3c}
 & \hspace{-250pt} (a-b)(a+b-c)E(a,b,c)+a(a-b-c-1)E(a-1,b,c) \nonumber \\
 +b(a-b+c+1)E(a,b-1,c) &=0,\\ \label{eq:rec3d}
& \hspace{-250pt} (a-b+c+1)(a+b-c+1) E(a+1,b,c) \nonumber \\
+(3a^2+a-(2a+1)(b+c)-(b-c)^2) E(a,b,c) \nonumber \\
+2a(a-b-c-1)E(a-1,b,c) &=0.
\end{align}
\end{theorem}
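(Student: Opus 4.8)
The plan is to treat the whole theorem through the generating function $F_{\rm M}(x,y,z)=1/(1-\sigma_2-2\sigma_3)$ from $(\ref{eq:fcard})$, i.e.\ through the single identity $QF_{\rm M}=1$ with $Q=1-\sigma_2-2\sigma_3=1-xy-xz-yz-2xyz$ supplied by the lemma above. Under the dictionary $x_j\mapsto T_j$, $D_j\mapsto n_j$, a three-term recurrence for $E(a,b,c)$, once its arguments are shifted so that every occurring shift lowers an index, is exactly the assertion $PF_{\rm M}=0$ for a differential operator $P=\sum_\mu \mu\,p_\mu(D_x,D_y,D_z)$ in which only three distinct $x$-monomials $\mu$ occur. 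So the task reduces to writing down these operators and checking $PF_{\rm M}=0$.

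The verification engine is the trio of first-order relations coming from $D_j(QF_{\rm M})=0$:
\begin{equation*}
Q\,D_xF_{\rm M}=(xy+xz+2xyz)\,F_{\rm M},\quad Q\,D_yF_{\rm M}=(xy+yz+2xyz)\,F_{\rm M},\quad Q\,D_zF_{\rm M}=(xz+yz+2xyz)\,F_{\rm M},
\end{equation*}
used together with $1-Q=\sigma_2+2\sigma_3$ and $QF_{\rm M}=1$. For $(\ref{eq:rec3a})$ I would first replace $(a,b)$ by $(a-1,b-1)$, turning it into $2(a-b)E(a-1,b-1,c)+(a-b+c+1)E(a,b-1,c)+(a-b-c-1)E(a-1,b,c)=0$; reading off the operator gives $P_a=2xy(D_x-D_y)+y(D_x-D_y+D_z)+x(D_x-D_y-D_z)$, with the three monomials $xy,y,x$. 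Multiplying $P_aF_{\rm M}$ on the left by $Q$ and substituting the three relations, the coefficient of $F_{\rm M}$ becomes $2xyz\left[(x-y)+(1+y)-(1+x)\right]=0$; since $Q$ is a unit in $\CC[[x,y,z]]$ this forces $P_aF_{\rm M}=0$. The same scheme handles $(\ref{eq:rec3b})$: after $b\mapsto b-1$ its operator is $P_b=2xy(D_x+1)+y(D_x-D_y+D_z)+(D_z-D_x-D_y)$, and after multiplying by $Q$ the inhomogeneous piece $2xy\cdot QF_{\rm M}$ cancels a term $2xy$, while the coefficient of $F_{\rm M}$ reduces (using $1-Q=\sigma_2+2\sigma_3$) to $-2xyQ$, so $QP_bF_{\rm M}=2xy-2xy\cdot QF_{\rm M}=0$ and $P_bF_{\rm M}=0$. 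Because these are power-series identities, the two recurrences hold for all admissible $(a,b,c)$ with no boundary exceptions.

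The remaining two recurrences are then linear consequences of the first two. Clearing the denominators $a-b-c-1$ and $c-a-b-1$ in $(\ref{eq:rec3a})$ and $(\ref{eq:rec3b})$ and eliminating $E(a,b+1,c)$ yields $(\ref{eq:rec3d})$, the coefficient of $E(a,b,c)$ assembling into $3a^2+a-(2a+1)(b+c)-(b-c)^2$. Applying $(\ref{eq:rec3a})$ at $(a-1,b-1,c)$ and $(\ref{eq:rec3b})$ at $(a,b-1,c)$ and eliminating $E(a-1,b-1,c)$ yields $(\ref{eq:rec3c})$. If one prefers, $(\ref{eq:rec3c})$ also admits a direct generating-function proof of the same flavour: its coefficients are quadratic, hence the associated operator has order two, and differentiating the first-order relations once expresses $Q^2D_iD_jF_{\rm M}$ as an explicit polynomial times $F_{\rm M}$, so that multiplying the operator identity by $Q^2$ reduces it to a polynomial identity to be checked.

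The one genuinely non-mechanical step is finding $P_a$ and $P_b$ with only three monomials — equivalently, locating low-weight elements of the left annihilator of $F_{\rm M}$ in the Weyl algebra; once the operators are in hand the verifications are one-line polynomial identities, and the elimination producing $(\ref{eq:rec3c})$--$(\ref{eq:rec3d})$ is routine. A less self-contained alternative would be to invoke Whipple's three-term contiguous relations for $\hpgo32(1)$ series, since the representations of $E(a,b,c)$ in Theorem~\ref{th:hpg3pl} are mutually contiguous and those relations specialize to $(\ref{eq:rec3a})$--$(\ref{eq:rec3d})$.
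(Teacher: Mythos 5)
Your proposal is correct and follows essentially the same route as the paper: your operators $P_a$ and $P_b$ are, after regrouping, exactly the two annihilating operators $(x_2-x_1)D_3+(2x_1x_2+x_1+x_2)(D_1-D_2)$ and $(x_2+1)(D_3-D_2)+(2x_1x_2+x_2-1)D_1+2x_1x_2$ that the paper states, and you likewise obtain (\ref{eq:rec3c})--(\ref{eq:rec3d}) as linear combinations of the first two recurrences. You merely spell out the verification that the operators annihilate $F_{\rm M}$ and the elimination steps, which the paper leaves implicit.
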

\begin{proof}
The first two recurrences follow from the differential operators
\begin{align*}
(x_2-x_1) D_3+(2x_1x_2+x_1+x_2)(D_1-D_2),\\
(x_2+1)(D_3-D_2)+(2x_1x_2+x_2-1)D_1+2x_1x_2 \;
\end{align*}
annihilating $F_{\rm M}(x_1,x_2,x_3)$. 
The  other two recurrences are linear combinations of (\ref{eq:rec3a})--(\ref{eq:rec3b}).
\end{proof}

For any $S\ge 3$, the following two recurrences follow from linear differential operators
for $F_{\rm M}(x_1,\ldots,x_n)$. 
To present the results compactly, we indicate only the shifted parameters of $E=E(n_1,\ldots,n_S)$. 
The first formula is a generalization of the 5-term relation in (\ref{eq:gillis5}).
\begin{theorem} \label{th:rec5}
The following recurrence relations hold:
\begin{align} \label{eq:rec5}
2(n_2-n_1)\, E=&\,(n_1+1)\,E(n_1+1)+n_1\,E(n_1-1) \nonumber\\
& -(n_2+1)\,E(n_2+1)-n_2\,E(n_2-1),\\
(n_1+1)\,E(n_1+1)=&\, n_2\,E(n_2-1)+\ldots+n_S\,E(n_S-1) \nonumber\\
& +(n_2+\ldots+n_S-n_1)\,E.
\end{align}
% More recurrences perhaps....
\end{theorem}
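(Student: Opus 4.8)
The plan is to follow the method already announced in the paragraph preceding the theorem: translate each recurrence into a first‑order linear differential operator annihilating the generating function $F_{\rm M}(x_1,\ldots,x_S)=1/\big(1-\sigma_2-2\sigma_3-\cdots-(S-1)\sigma_S\big)$, then apply the algebra isomorphism $x_j\mapsto T_j$, $D_j\mapsto n_j$ (with $D_j=x_j\partial_{x_j}$) to read off the recurrence on the Taylor coefficients $E(n_1,\ldots,n_S)$. So the real content is to exhibit, for each of the two stated recurrences, an explicit operator $L\in\CC\langle x_1,\ldots,x_S;D_1,\ldots,D_S\rangle$ with $L\,F_{\rm M}=0$ whose image under the isomorphism is the asserted relation.

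First I would handle the denominator. Write $G=1/F_{\rm M}=1-\sigma_2-2\sigma_3-\cdots-(S-1)\sigma_S = \sum_{j\ge 0}(1-j)\sigma_j$ (the form established in the first Lemma of \S\ref{sec:algeo}). The operator $D_k$ acts on a rational function $1/G$ by $D_k(1/G)=-(D_kG)/G^2 = -(D_kG)\,F_{\rm M}^2$, so an operator of the shape $\sum_k c_k(x)D_k + c_0(x)$ annihilates $F_{\rm M}$ exactly when $-\sum_k c_k(x)\,D_kG + c_0(x)\,G = 0$ as a polynomial identity. Thus for each target recurrence I must solve a \emph{polynomial} equation in the $x_j$'s for the coefficients $c_k,c_0$. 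The key computation is $D_k\sigma_j = x_k\,\partial_{x_k}\sigma_j = x_k\,\sigma_{j-1}^{(k)}$, where $\sigma_{j-1}^{(k)}$ is the $(j-1)$‑st elementary symmetric polynomial in the variables \emph{other than} $x_k$; equivalently $\sigma_j = \sigma_j^{(k)} + x_k\sigma_{j-1}^{(k)}$, which lets one reorganize everything around a distinguished variable. For the first (symmetric, $5$‑term) recurrence the natural ansatz is an operator symmetric under swapping $x_1\leftrightarrow x_2$ and using only $D_1,D_2$; for the second recurrence the natural ansatz is $(x_1+\cdots) \;+\; \sum_{k\ge 2}(\cdots)(D_1-D_k)$ or similar, reflecting that $E(n_1+1)$ on the left comes from an $x_1$‑shift and the $E(n_k-1)$ on the right come from $D_k$‑type terms; one then matches the coefficient $(n_2+\cdots+n_S-n_1)$ against a combination of $D_1,\ldots,D_S$ acting on $G$. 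In both cases the verification that the chosen $c_k,c_0$ make $-\sum_k c_kD_kG + c_0G$ vanish reduces, after clearing, to an identity among elementary symmetric polynomials that can be checked by the same diagonal‑specialization trick used twice already in the excerpt ($x_1=\cdots=x_S=y$): a symmetric polynomial that is at most linear in each variable is determined by its restriction to the diagonal, so it suffices to verify the scalar identity in $y$.

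The main obstacle is \emph{finding} the right operators — i.e. producing the ansatz with the correct (and minimal‑term) coefficients — rather than verifying them; once an operator is written down, the check is the routine symmetric‑function manipulation just described, and the passage $x_j\mapsto T_j,\ D_j\mapsto n_j$ (using $T_jn_j=(n_j-1)T_j$, i.e. $x_jD_j\mapsto T_jn_j$ shifts $n_j$ down by one while multiplying, matching the coefficient/index bookkeeping) is purely formal. A clean way to present it is: exhibit the two operators explicitly (as was done for the $S=3$ case in Theorem~\ref{lm:rec}), assert $L\,F_{\rm M}=0$, reduce to the polynomial identity $-\sum_k c_k\,D_kG + c_0\,G=0$, prove that identity by diagonal specialization, and then quote the isomorphism to conclude. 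One should also note the degenerate bookkeeping at the boundary ($n_j=0$ terms, where a ``$E(n_j-1)$'' is multiplied by $n_j=0$ and so drops out), so that the recurrences are valid as stated with the convention $E(0,\ldots,0)=1$ and $E=0$ outside the triangle‑type region; this is automatic from the Taylor‑coefficient interpretation since $T_j n_j$ carries the factor $n_j$.
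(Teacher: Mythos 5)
Your overall framework is the right one and matches the strategy the paper announces before Theorem~\ref{lm:rec}: find a first-order operator $L=\sum_k c_k D_k+c_0$ with $L\,F_{\rm M}=0$ and transport it through $x_j\mapsto T_j$, $D_j\mapsto n_j$. But the proposal stops exactly where the proof has to begin: the two operators are never written down. For this theorem the operators \emph{are} the content --- everything else (the reduction of $L\,F_{\rm M}=0$ to the polynomial identity $-\sum_k c_kD_kG+c_0G=0$ for $G=1/F_{\rm M}$, and the formal translation into shift operators) is the routine part you describe correctly. Acknowledging that ``the main obstacle is finding the right operators'' and then not producing them leaves the argument without its substance. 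For comparison, the paper does not guess an ansatz and verify: it introduces $H=(1+x_1)\cdots(1+x_S)$, observes $G=(1-D_1-\cdots-D_S)H$, computes $D_jG=\frac{x_j}{x_j+1}G-\frac{x_j}{(x_j+1)^2}H$, solves for $H$ in $S$ different ways, and eliminates $H$ to obtain the operators $L_1,L_2$ systematically (the sign flip $D_j\mapsto -D_j$ then converts annihilators of $G$ into annihilators of $1/G$). Reproducing that elimination device is much easier than searching for coefficients by hand, and it is what actually yields the asymmetric $5$-term and $(S+1)$-term shapes.

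Separately, the verification mechanism you propose is not sound as stated. The certifying identity for the five-term recurrence (\ref{eq:rec5}) is, after clearing denominators, \emph{antisymmetric} under $x_1\leftrightarrow x_2$, symmetric only in $x_3,\ldots,x_S$, and no longer multilinear in $x_1,x_2$; it is therefore not ``a symmetric polynomial at most linear in each variable,'' and the diagonal-specialization argument does not apply --- worse, its restriction to $x_1=\cdots=x_S$ vanishes identically by antisymmetry and so certifies nothing. A correct verification must either expand directly (for instance, isolate $x_1,x_2$ via $\sigma_j=\sigma_j^{(k)}+x_k\sigma_{j-1}^{(k)}$ and compare coefficients, after which diagonal specialization in the \emph{remaining} variables is legitimate) or follow the paper's elimination route, under which the identity holds by construction.
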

\begin{proof} Let us denote:
\begin{align*}
H&=(1+x_1)(1+x_2)\cdots(1+x_S), \\
G&=(1-D_1-D_2-\ldots-D_S)H,
\end{align*}
so that $F_{\rm M}=1/G$. We have
\begin{align*}
D_jH &=\frac{x_j}{x_j+1}H, \qquad 
G=\left(1-\frac{x_1}{x_1+1}-\ldots-\frac{x_S}{x_S+1}\right)H, \\
D_jG & =\frac{x_j}{x_j+1}G-\frac{x_j}{(x_j+1)^2}H,
\end{align*}
and subsequently
\begin{align*}
H&=-\frac{(x_j+1)^2}{x_j}D_jG+(x_j+1)G \qquad \mbox{(with } j\in\{1,2\}) \\
&=-(x_1+1)D_1G-\ldots-(x_S+1)D_SG+(x_1+\ldots+x_S+1)G.
\end{align*}
Elimination of $H$ gives the following differential operators that annihilate $G$:
\begin{align*}
L_1=&\, -\frac{(x_1+1)^2}{x_1}\,D_1+\frac{(x_2+1)^2}{x_2}\,D_2+(x_1-x_2),\\
L_2=&\, \frac{x_1+1}{x_1}D_1-(x_2+1)D_2-\ldots-(x_S+1)D_S+(x_2+\ldots+x_S).
\end{align*}
To get differential operators annihilating $F_{\rm M}$, multiply each $D_j$ by $-1$ in $L_1,L_2$.
This leads to the claimed recurrences.
\end{proof}
The last recurrences have 5 and $S+1$ terms, respectively. 
But the order of recurrences appears to grow quadratically with $S$.
% But the linear space of 
% the $E(n_1,\ldots,n_S)$ values has the holonomic rank $S(S+1)$?? \cite{SST99} 
For example, for $S=4$ % we typically have recurrence relations of order 6 in general.
the recurrence with shifts in $a$ alone has order 6 (thus 7 terms), 
with the coefficients are of degree 9 in $a,b,c,d$.
The multi-variate Zeilberger summation routine \cite{ZeilbMZ}
applied to Theorem \ref{eq:nnnn} returns % 
a recurrence of order 6 for $E(n,n,n,n)$, of degree 28 in $n$ 
(in 30 min.~on Maple 14, on a 2.66 GHz Intel Core 2 Duo Mac).
Here is a relatively compact 6-term relation:
\begin{align*}
\big((a\!-\!b)\big(a^2\!+\!2ab\!-\!b^2\!+\!4a\!+\!2-(c\!-\!d)^2\big)-2(b\!+\!1)^2(c\!+\!d\!+\!2)\big)
E(a\!+\!1, b\!+\!1, c, d) \\
+(a\!+\!1)\big((a\!-\!b)(3a\!+\!5b\!+\!7)-(2a\!+\!2b\!+\!3)(c\!+\!d\!+\!2)-(c\!-\!d)^2\big)
E(a, b\!+\!1, c, d) \\
+2a(a+1)(a-b-c-d-2) E(a\!-\!1, b\!+\!1, c, d) \\
+2(b+1)(a+2)(a-b+c+d+2) E(a\!+\!2, b, c, d) \\
+(b\!+\!1)\big((a\!-\!b)(9a\!-\!b\!+\!11)+(6a\!-\!2b\!+\!7)(c\!+\!d\!+\!2)+(c\!-\!d)^2\big) 
E(a\!+\! 1, b, c, d) \\
+2(a+1)(b+1)(5a-5b+c+d+2)E(a,b,c,d)& =0. % \hspace{-16pt}
\end{align*}
A few more 6-term relations are obtained by combination with (\ref{eq:rec5}).

\section{Bounding the number of all Nash equilibria}
\label{sec:allnash}

As in \S \ref{sec:algeo}, consider a generic game of $S$ players with $m_1,m_2,\ldots,m_S$ options.
In a {\em subgame} we allow each player $j\in\Omega$ to choose from a fixed non-empty subset 
of his original pure options $\Theta_j$, and keep the payoffs $a^{(j}_{k_1,k_2,\ldots,k_S}$ 
the same for the still possible combinations of pure options. 
Any Nash equilibrium of the original game can be considered as a TMNE of the subgame
that allows only the options played with a non-zero probability. On the other hand, 
not all TMNE of a subgame would be Nash equilibria for the original game. 
In the notation of the proof of Corollary \ref{eq:tmneb}, we must have the inequalities 
$P_j(i)\le P_j(k)$ for any $j,i,k$ with $p^{(j)}_{i}=0$, $p^{(j)}_{k}>0$.

To bound the number of all Nash equilibria (in a generic game),
we add up the maximal $E$-numbers of TMNE for all of its subgames.
Let us denote this sum by $B(m_1,m_2,\ldots,m_S)$.
In the case of two players, $E(i,k)=\delta_{i,k}$, hence
we are counting then the number of pairs of non-empty subsets 
of $\Theta_1,\Theta_2$ of the same size $k$:
\begin{align} \label{eq:max2}
B(m_1,m_2)= \sum_{k=1}^{\min(m_1, m_2)} {m_1 \choose k} {m_2 \choose k}
& =\hpg21{-m_1,-m_2}{1}{1}-1 \nonumber \\
&= {m_1+m_2 \choose m_1}-1. % \hspace{-50pt}
\end{align}
The binomial sum (from $k=0$) is evaluated as a special case of the Chu-Vandermonde formula \cite{Wiki}.
The bound $B(m,m)\in\Theta\big(4^m/\sqrt{m}\big)$ is well known. 
In \cite{vSt99}, a sharper upper bound in $\Theta\big((3\sqrt3/2)^m/\sqrt{m}\big)$ for the total number
of Nash equilibria is noted, and games of two players with $\Theta\big((\sqrt2+1)^m/\sqrt{m}\big)$
Nash equilibria are constructed. For $m=4$, the maximal number  \cite{McLP99} is 15.

Generally, our bound is
\begin{align} \label{eq:sms}
B(m_1,\ldots,m_S)=\sum_{k_1=1}^{m_1}\cdots \sum_{k_S=1}^{m_S} 
{m_1 \choose k_1}\cdots {m_S \choose k_S}E(k_1-1,\ldots,k_S-1).
\end{align}
First we prove an easier similar sum.
\begin{lemma} \label{th:sms}
$\displaystyle
\sum_{k_1=0}^{n_1}\cdots \sum_{k_S=0}^{n_S} 
{n_1 \choose k_1}\cdots {n_S \choose k_S}E(k_1,k_2,\ldots,k_S)=
\frac{(n_1+\ldots+n_S)!}{n_1!\cdots n_S!}.
$
\end{lemma}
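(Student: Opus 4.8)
The plan is to deduce the identity from the product representation $(\ref{eq:mbx})$ of the block derangement numbers, i.e.\ from the fact that $E(k_1,\ldots,k_S)$ is the coefficient of $x_1^{k_1}\cdots x_S^{k_S}$ in $\prod_{j=1}^S (X-x_j)^{k_j}$, where $X=x_1+\cdots+x_S$. Write $N=n_1+\cdots+n_S$. The starting point is the trivial identity
\[
X^N=\prod_{j=1}^S X^{n_j}=\prod_{j=1}^S\bigl((X-x_j)+x_j\bigr)^{n_j}.
\]
Expanding each of the $S$ factors by the binomial theorem gives
\[
X^N=\sum_{k_1=0}^{n_1}\cdots\sum_{k_S=0}^{n_S}\ \prod_{j=1}^S\binom{n_j}{k_j}(X-x_j)^{k_j}\,x_j^{\,n_j-k_j}.
\]

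Next I would extract the coefficient of $x_1^{n_1}x_2^{n_2}\cdots x_S^{n_S}$ from both sides. On the left, since $X=x_1+\cdots+x_S$ and $N=n_1+\cdots+n_S$, the multinomial theorem gives $(n_1+\cdots+n_S)!/(n_1!\cdots n_S!)$. On the right, the summand indexed by $(k_1,\ldots,k_S)$ contributes $\prod_j\binom{n_j}{k_j}$ times the coefficient of $x_1^{n_1}\cdots x_S^{n_S}$ in $\bigl(\prod_j x_j^{n_j-k_j}\bigr)\bigl(\prod_j (X-x_j)^{k_j}\bigr)$; dividing out the monomial $\prod_j x_j^{n_j-k_j}$, this is the coefficient of $x_1^{k_1}\cdots x_S^{k_S}$ in $\prod_{j=1}^S (X-x_j)^{k_j}$. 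Since $X-x_j=x_1+\cdots+x_{j-1}+x_{j+1}+\cdots+x_S$, this last polynomial is precisely the product $(\ref{eq:mbx})$ with each $n_j$ replaced by $k_j$, so by the card-recreation interpretation its $x_1^{k_1}\cdots x_S^{k_S}$-coefficient equals $E(k_1,\ldots,k_S)$ (in particular $E(0,\ldots,0)=1$). Comparing the two sides yields exactly the asserted formula, since $\binom{n_j}{k_j}$ is untouched by the monomial division.

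The argument is short and I do not anticipate a genuine obstacle; the only point to handle with care is the coefficient bookkeeping in the last step — namely that pulling out the monomial $\prod_j x_j^{n_j-k_j}$ (legitimate term by term, as the exponents $n_j-k_j$ are non-negative and $\prod_j(X-x_j)^{k_j}$ is homogeneous of degree $k_1+\cdots+k_S$) turns the extraction into the one defining $E(k_1,\ldots,k_S)$. A purely combinatorial alternative is also available: taking the $N$ elements to be the players' original holdings, start from an arbitrary ordered partition of them into blocks of sizes $n_1,\ldots,n_S$ (there are $(n_1+\cdots+n_S)!/(n_1!\cdots n_S!)$ of them), and record for each player $j$ the subset $F_j$ of his original $n_j$ cards that lands back in block $j$; the remaining cards form a block derangement of the reduced sizes $n_j-|F_j|$, and summing over the $\prod_j\binom{n_j}{|F_j|}$ choices of the $F_j$ reproduces the left-hand side after the substitution $k_j=n_j-|F_j|$.
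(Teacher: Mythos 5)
Your proof is correct. Your primary route --- writing $X^N=\prod_{j}\bigl((X-x_j)+x_j\bigr)^{n_j}$, expanding each factor binomially, and comparing the coefficient of $x_1^{n_1}\cdots x_S^{n_S}$ on both sides using the fact (established in \S\ref{sec:algeo} via Corollary \ref{eq:tmneb} and the card-recreation reading of the product (\ref{eq:mbx})) that the coefficient of $x_1^{k_1}\cdots x_S^{k_S}$ in $\prod_j(X-x_j)^{k_j}$ equals $E(k_1,\ldots,k_S)$ --- is sound; the coefficient bookkeeping you single out is indeed harmless, since each summand is a fixed monomial $\prod_j x_j^{n_j-k_j}$ times a polynomial, and monomial multiplication only shifts which coefficient is extracted. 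The paper instead gives a one-line bijective proof: every ordered partition of the $N$ cards into blocks of sizes $n_1,\ldots,n_S$ is viewed as a block derangement of the sub-deck obtained by discarding the cards dealt back to their original owners, the binomial coefficients recording which cards were discarded. That is precisely the ``purely combinatorial alternative'' you sketch at the end, with $k_j=n_j-|F_j|$. The two arguments are really the same decomposition in different clothing --- splitting $X$ into $(X-x_j)+x_j$ is the algebraic shadow of deciding, card by card, whether it returns to its owner --- so your generating-function version buys a mechanical verification that leans on the already-proved product formula (\ref{eq:mbx}), while the paper's bijection is shorter and self-contained; either is acceptable.
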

\begin{proof}
We adopt the context of the card recreation described in the introduction.
The left-hand side counts block derangements in all subsets of $N=n_1+\ldots+n_S$ cards,
while the right-hand side counts all partitions of the $N$ cards to sets of
$n_1,\ldots,n_S$ cards. There is a bijection between the block derangements and the partitions,
where each partition (of the whole set of $N$ cards) is considered as a block derangement
(of a subset) after ignoring the cards that are dealt back to the same player.
We count $E(0,\ldots,0)=1$.
\end{proof}
We characterize the bound $B(m_1,\ldots,m_S)$ as a sum of multinomial coefficients.
That leads to the generating function for these numbers.
\begin{theorem} \label{th:sms2}
$\displaystyle
B(m_1,\ldots,m_S)=
\sum_{\ell_1=0}^{m_1-1}\cdots \sum_{\ell_S=0}^{m_S-1} 
\frac{(\ell_1+\ldots+\ell_S)!}{\ell_1!\cdots \ell_S!}.
$
\end{theorem}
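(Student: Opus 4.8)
The plan is to start from the definition~\eqref{eq:sms} of $B(m_1,\ldots,m_S)$ as a double-indexed sum over subsets, substitute the card-counting identity of Lemma~\ref{th:sms}, and then re-index the resulting sum. Explicitly, $B(m_1,\ldots,m_S)=\sum_{k_1=1}^{m_1}\cdots\sum_{k_S=1}^{m_S}\binom{m_1}{k_1}\cdots\binom{m_S}{k_S}E(k_1-1,\ldots,k_S-1)$; shifting $k_j\mapsto k_j+1$ turns this into $\sum_{k_1=0}^{m_1-1}\cdots\sum_{k_S=0}^{m_S-1}\binom{m_1}{k_1+1}\cdots\binom{m_S}{k_S+1}E(k_1,\ldots,k_S)$. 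The goal is then to show this equals $\sum_{\ell_1=0}^{m_1-1}\cdots\sum_{\ell_S=0}^{m_S-1}\frac{(\ell_1+\cdots+\ell_S)!}{\ell_1!\cdots\ell_S!}$.

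The cleanest route is combinatorial, paralleling the bijection in the proof of Lemma~\ref{th:sms}. I would fix for each player $j$ an ``active'' subset $A_j\subseteq\Theta_j=\{1,\ldots,m_j\}$ of size $k_j\ge 1$, together with a block derangement of $\bigcup_j A_j$ of type $(k_1-1,\ldots,k_S-1)$ after removing one designated fixed point per player --- or, more directly, I would apply Lemma~\ref{th:sms} with $n_j=m_j-1$ \emph{after} extracting the subset structure. Concretely: the sum $\sum_{k_j=1}^{m_j}\binom{m_j}{k_j}(\cdot)$ over an active set of size $k_j$ out of $m_j$ options, weighted by an $E$-count on $k_j-1$ elements, should be re-expressed by first choosing which $\ell_j:=k_j-1$ of the options carry the derangement data and then choosing the active set --- but the bijection of Lemma~\ref{th:sms} already says $\sum_{\ell_j\le n_j}\prod_j\binom{n_j}{\ell_j}E(\ell_1,\ldots,\ell_S)=\frac{(n_1+\cdots+n_S)!}{n_1!\cdots n_S!}$, so the target sum over $\ell_1,\ldots,\ell_S$ with $\ell_j\le m_j-1$ is exactly the iterated version where we apply Lemma~\ref{th:sms} to every choice of ``ceiling'' $(n_1,\ldots,n_S)=(\ell_1',\ldots,\ell_S')$ with $\ell_j'\le m_j-1$. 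Thus the identity to prove reduces to the binomial fact that $\binom{m_j}{k_j}$, summed appropriately against the nested $E$-counts, telescopes through Lemma~\ref{th:sms} applied with varying upper limits; equivalently, one shows $\sum_{k=0}^{m-1}\binom{m}{k+1}f(k)=\sum_{\ell=0}^{m-1}\bigl(\sum_{k=0}^{\ell}\binom{\ell}{k}f(k)\bigr)$ for the relevant $f$, which follows from the column-sum identity $\binom{m}{k+1}=\sum_{\ell=k}^{m-1}\binom{\ell}{k}$ (hockey-stick). Applying this identity in each of the $S$ coordinates simultaneously converts $\prod_j\binom{m_j}{k_j+1}$ into $\sum_{\ell_j\ge k_j,\ \ell_j\le m_j-1}\prod_j\binom{\ell_j}{k_j}$, and swapping the order of summation (sum over $\ell_1,\ldots,\ell_S$ outermost, then over $k_1\le\ell_1,\ldots,k_S\le\ell_S$) lets Lemma~\ref{th:sms} collapse the inner sum to $\frac{(\ell_1+\cdots+\ell_S)!}{\ell_1!\cdots\ell_S!}$, which is precisely the claimed right-hand side.

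I expect the main obstacle to be purely bookkeeping: justifying the multi-index interchange of summation and confirming that the hockey-stick identity $\binom{m_j}{k_j+1}=\sum_{\ell_j=k_j}^{m_j-1}\binom{\ell_j}{k_j}$ applied in all $S$ coordinates produces exactly the range $0\le\ell_j\le m_j-1$ with no boundary mismatch (in particular that the $k_j=0$ term of $E$, namely $E(0,\ldots,0)=1$, is handled consistently, as it is in Lemma~\ref{th:sms}). There is no analytic difficulty and no new generating-function input is needed beyond what Lemma~\ref{th:sms} already supplies; alternatively, one can phrase the whole argument as a single bijection --- a Nash-equilibrium of a subgame is encoded by (active sets $A_j$) $+$ (a partition of $\bigcup A_j$ into blocks of the prescribed sizes, modulo returned cards) --- and read off both sides as counting the same labeled objects, which may be the more transparent write-up.
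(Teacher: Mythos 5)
Your proposal is correct and follows essentially the same route as the paper: shift the summation indices, expand $\binom{m_j}{k_j+1}=\sum_{\ell_j}\binom{\ell_j}{k_j}$ via the hockey-stick (iterated Pascal) identity, interchange the order of summation, and collapse the inner sums with Lemma~\ref{th:sms}. The paper's proof is exactly this computation, so no further comparison is needed.
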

\begin{proof}
We set $m_j=n_j+1$ and shift the summation indices in (\ref{eq:sms}):
\begin{align} \label{eq:sms2}
B(m_1,\ldots,m_S)=\sum_{k_1=0}^{n_1}\cdots \sum_{k_S=0}^{n_S} 
{n_1+1 \choose k_1+1}\cdots {n_S+1 \choose k_S+1}E(k_1,\ldots,k_S).
\end{align} 
Iterated use of  %$\displaystyle
${n+1 \choose k+1}={n \choose k}+{n \choose k+1}$
gives
\begin{align*}
{n_j+1 \choose k_j+1} &=
{n_j \choose k_j}+{n_j-1 \choose k_j}+\ldots+{k_j \choose k_j}+{k_j-1 \choose k_j}+\ldots 
\\ &=\sum_{\ell_j=0}^{n_j}  {\ell_j \choose k_j}.
\end{align*}
The binomial coefficients with $\ell_j<k_j$ are zero.  
After expanding the binomial coefficients in (\ref{eq:sms2}),
we sum up by the $k_j$'s first using Lemma \ref{th:sms}.
\end{proof}
\begin{corollary} The bound $B(m_1,\ldots,m_S)$ equals 
the coefficient to $x_1^{m_1}\cdots x_S^{m_S}$ 
in the multivariate Taylor expansion of
\begin{equation} \label{eq:genfb}
F_B(x_1,\ldots,x_S)=\frac{x_1\cdots x_S}{(1-x_1)\cdots(1-x_S)(1-x_1-\ldots-x_S)}
\end{equation}
at $(x_1,\ldots,x_S)=(0,\ldots,0)$.
\end{corollary}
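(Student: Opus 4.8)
The plan is to extract the generating function directly from the closed form in Theorem~\ref{th:sms2} by recognizing the multinomial coefficients $(\ell_1+\ldots+\ell_S)!/(\ell_1!\cdots\ell_S!)$ as Taylor coefficients of a known rational function, and then accounting for the finite ranges of summation. First I would recall the classical fact that
\[
\frac{1}{1-x_1-\ldots-x_S}=\sum_{\ell_1,\ldots,\ell_S\ge 0}\frac{(\ell_1+\ldots+\ell_S)!}{\ell_1!\cdots\ell_S!}\,x_1^{\ell_1}\cdots x_S^{\ell_S},
\]
which is just the multinomial theorem applied term-by-term to the geometric series in the variable $x_1+\ldots+x_S$. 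Thus the coefficient of $x_1^{\ell_1}\cdots x_S^{\ell_S}$ in $1/(1-x_1-\ldots-x_S)$ is exactly the summand appearing in Theorem~\ref{th:sms2}.

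Next I would handle the passage from the individual summand to the partial sum $\sum_{\ell_1=0}^{m_1-1}\cdots\sum_{\ell_S=0}^{m_S-1}$. The standard device is that multiplying a generating function by $1/(1-x_j)$ replaces its coefficient sequence in the $j$-th variable by the sequence of partial sums; concretely, if $G(x_1,\ldots,x_S)=\sum c_{\ell_1,\ldots,\ell_S}x_1^{\ell_1}\cdots x_S^{\ell_S}$, then the coefficient of $x_1^{m_1}\cdots x_S^{m_S}$ in $x_1\cdots x_S\cdot G/\bigl((1-x_1)\cdots(1-x_S)\bigr)$ equals $\sum_{\ell_1\le m_1-1}\cdots\sum_{\ell_S\le m_S-1}c_{\ell_1,\ldots,\ell_S}$. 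Applying this with $G=1/(1-x_1-\ldots-x_S)$ gives precisely $F_B$ as stated, with the numerator factor $x_1\cdots x_S$ implementing the shift $\ell_j\le m_j-1$ into a coefficient extraction at $x_j^{m_j}$. So the corollary follows by combining these two observations with Theorem~\ref{th:sms2}.

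I expect essentially no serious obstacle here; the only point requiring a little care is the bookkeeping of index shifts, namely verifying that the factor $x_1\cdots x_S$ in the numerator together with the $1/(1-x_j)$ factors produces exactly the range $0\le\ell_j\le m_j-1$ rather than, say, $1\le\ell_j\le m_j$ or $0\le\ell_j\le m_j$. One checks this by writing $[x_j^{m_j}]\,\frac{x_j}{1-x_j}H(x_j)=[x_j^{m_j-1}]\,\frac{1}{1-x_j}H(x_j)=\sum_{\ell_j=0}^{m_j-1}[x_j^{\ell_j}]H(x_j)$ for any formal power series $H$, which is routine. All generating-function manipulations are at the level of formal power series, so convergence is not an issue, and the result is immediate once the two building blocks above are in place.
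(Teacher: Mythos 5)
Your proof is correct and follows exactly the paper's own (much terser) argument: the Taylor coefficients of $1/(1-x_1-\ldots-x_S)$ are the multinomial coefficients, and the factors $x_j/(1-x_j)$ effect the partial summation over $0\le\ell_j\le m_j-1$ appearing in Theorem~\ref{th:sms2}. Your careful bookkeeping of the index shift coming from the numerator $x_1\cdots x_S$ is exactly the detail the paper leaves implicit.
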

\begin{proof}
The Taylor coefficients of $1/(1-\sigma_1)$ % $1/(1-x_1-\ldots-x_S)$ 
are the multinomial coefficients.
The factors $x_j/(1-x_j)$ represent their summation in Theorem \ref{th:sms2}.
\end{proof}

\begin{remark} \rm
Lemma \ref{th:sms} and Theorem \ref{th:sms2} can be proved using Legendre polynomials.
After substituting (\ref{eq:linc1})--(\ref{eq:linc2}) into Lemma \ref{th:sms} or (\ref{eq:sms2}) we recognize
\[
\int_0^\infty P_{n_1}(z)P_{n_2}(z)\cdots P_{n_S}(z)\exp(-z)dz,
\]
where, respectively,
\begin{align*}
P_{n}(z) = \sum_{k=0}^n (-1)^k {n\choose k} L_k(z) = \frac{z^n}{n!}  \ % \quad 
\mbox{  or } % \quad
P_{n}(z) =\sum_{k=0}^n (-1)^k {n+1\choose k+1} L_k(z) =\sum_{j=0}^n\frac{z^j}{j!}
\end{align*}
by straightforward hypergeometric summation. 
Recall that \mbox{$\int_0^{\infty} \! z^N \! \exp(-z)dz=N!$.}
\end{remark}

\begin{remark} \rm
The generating function $F_{\rm N}$ in (\ref{eq:fnash}) is related to $F_B$ as follows:
\begin{equation*}
F_B={\cal B}^{-1} \left( \exp(-\sigma_1) \, {\cal B} F_{\rm N} \right).
\end{equation*}
Here $\cal B$ is the multivariate version of the Borel transform \cite{Wiki} that sends 
\mbox{$x_j^k\mapsto x^k_j/k!$}. 
Similarly, $1/(1-\sigma_1)={\cal B}^{-1} \left( \exp(-\sigma_1) \, {\cal B} F_{\rm M} \right)$
for the function $F_{\rm M}$ in (\ref{eq:fcard}).
\end{remark}

\begin{remark} \rm
The bound (\ref{eq:sms}) on the number of Nash equilibria % by the $B$-numbers 
is not sharp, just like (\ref{eq:max2}) in the $S=2$ case. In particular, $B(2,2,2)=16$ but a sharper bound is 9.
Instead of counting no more than 8 totally pure equilibria, 2 totally mixed and 6 other equilibria ($8+2+6=16$),
we count $4+2+3=9$, %equilibria, 
respectively.   The reason is that if someone plays a pure strategy (i.e., $k_j=1$), 
that strategy must be generically a unique best response.  
McLennan and McKelvey informed that they had randomly generated games 
(of 3 players with 2 options each) with indeed 9 Nash equilibria in total.
Generally, we can decrease the terms in (\ref{eq:sms}) with some $k_j=1$
by replacing one binomial coefficient $m_j\choose1$ with 1.
That would not affect the leading asymptotic term in Theorem \ref{th:basympt} below.
In the $S=2$ case, it is an important open problem whether the asymptotic
upper bound $\Theta\big((3\sqrt3/2)^m/\sqrt{m}\big)$ in \cite{vSt99} is sharp.
\end{remark} 

The $B$-numbers are combinatorially interesting, nevertheless. % though.
The integer sequence 
\[ 
B(m,m)={2m\choose m}-1
\]
appears in OEIS \cite{Sloan} as A030662.
The sequences $B(m,m,m)$, $B(m,m,m,m)$ are A144660, A144661, respectively.
Here we derive a few additional results, particularly on $B(a,b,c)$.
\begin{lemma}
\begin{itemize}
\item For positive integers $m_1,\ldots,m_S$,
\begin{equation} \label{eq:brec}
B(m_1,\ldots,m_S)= B(m_1-1)+\ldots+B(m_S-1)+1.
\end{equation}
The non-shifted $B$-arguments $m_j$ are skipped on the right-hand side,
as in Theorem $\ref{th:rec5}.$
\item We have
\begin{equation}  \label{eq:mcrec}
\sum_{k_1=0}^{m_1} \cdots \sum_{k_S=0}^{m_S} \big( 1-\#\{j:k_j<m_j\} \big)
\frac{(k_1+\ldots+k_S)!}{k_1!\cdots k_S!} =1. 
\end{equation}
Here only the summation term with all $k_j=m_j$ is positive.
\end{itemize}
\end{lemma}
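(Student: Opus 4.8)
The plan is to derive both identities from the closed form of $B$ in Theorem~\ref{th:sms2}, namely
\[
B(m_1,\ldots,m_S)=\sum_{\ell_1=0}^{m_1-1}\cdots\sum_{\ell_S=0}^{m_S-1}\binom{\ell_1+\cdots+\ell_S}{\ell_1,\ldots,\ell_S},
\]
combined with the elementary ``last-letter'' recurrence for the multinomial coefficient $\binom{|\ell|}{\ell_1,\ldots,\ell_S}=\frac{(\ell_1+\cdots+\ell_S)!}{\ell_1!\cdots\ell_S!}$, where $|\ell|:=\ell_1+\cdots+\ell_S$:
\[
\binom{|\ell|}{\ell_1,\ldots,\ell_S}=\sum_{j:\,\ell_j\ge1}\binom{|\ell|-1}{\ell_1,\ldots,\ell_j-1,\ldots,\ell_S}\qquad(\ell\neq0).
\]
This is immediate from $\sum_j\ell_j=|\ell|$ (equivalently, from conditioning on the last letter of an arrangement).

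To prove $(\ref{eq:brec})$ I would isolate the term $\ell=(0,\ldots,0)$ in the formula for $B(m_1,\ldots,m_S)$, which contributes $1$, and apply the last-letter recurrence to each of the remaining terms. After interchanging the two summations, the contributions indexed by a fixed $j$ collect together; the substitution $\ell_j\mapsto\ell_j+1$ (leaving $\ell_i$ fixed for $i\ne j$) turns them into the sum of $\binom{|\ell|}{\ell_1,\ldots,\ell_S}$ over the box $0\le\ell_i\le m_i-1$ ($i\ne j$), $0\le\ell_j\le m_j-2$, which is precisely $B(m_1,\ldots,m_j-1,\ldots,m_S)$ by Theorem~\ref{th:sms2}. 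Summing over $j$ yields $(\ref{eq:brec})$. The degenerate case $m_j=1$ is consistent with the conventions: the $j$-th box side becomes empty and the corresponding $B$-value equals $0$.

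For $(\ref{eq:mcrec})$ I would write $\#\{j:k_j<m_j\}=\sum_{j=1}^S\mathbf{1}[k_j<m_j]$ and split the left-hand side as
\[
\sum_{\substack{0\le k_i\le m_i\\ i=1,\ldots,S}}\binom{|k|}{k_1,\ldots,k_S}-\sum_{j=1}^S\;\sum_{\substack{0\le k_i\le m_i\ (i\ne j)\\ 0\le k_j\le m_j-1}}\binom{|k|}{k_1,\ldots,k_S}.
\]
By Theorem~\ref{th:sms2} the first sum equals $B(m_1+1,\ldots,m_S+1)$ and the $j$-th inner sum equals $B(m_1+1,\ldots,m_j,\ldots,m_S+1)$. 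Hence the whole expression is $B(m_1+1,\ldots,m_S+1)-\sum_{j=1}^S B(m_1+1,\ldots,m_j,\ldots,m_S+1)$, which is $1$ by part $(\ref{eq:brec})$ applied to the arguments $m_1+1,\ldots,m_S+1$.

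I do not expect a genuine obstacle: the whole argument rests on the last-letter multinomial identity and Theorem~\ref{th:sms2}. The only points requiring a little care are checking that a one-step decrease of a single coordinate turns one box exactly into the box attached to the neighbouring $B$-value, and that zero arguments of $B$ (equivalently, empty summation ranges on the right of $(\ref{eq:mcrec})$) cause no trouble, since $B$ then vanishes.
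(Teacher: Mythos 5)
Your proof is correct and is essentially the paper's argument written out at the level of coefficients: the paper obtains the first identity by multiplying the generating function $F_B$ by $1-x_1-\cdots-x_S$ and observing that every nonzero Taylor coefficient of the product equals $1$, which is precisely your last-letter multinomial recurrence applied to the closed-form sum of Theorem~\ref{th:sms2}. Your deduction of the second identity from the first with all arguments increased by $1$, counting how often each multinomial coefficient occurs, matches the paper's proof exactly.
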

\begin{proof}
If we multiply the generating function (\ref{eq:genfb}) by $1-x_1-\ldots-x_S$,
we get a multivariate Taylor series with all non-zero coefficients 
% (to the monomials containing all variables) 
equal to 1. The multiplication of series relates the $B$-numbers as in the first formula. 
The second formula is obtained by increasing all $m_j$ by 1 in (\ref{eq:brec}),
and counting appearances of each multinomial coefficient.
\end{proof}

\begin{lemma}
For integers $a\ge 0$, $b\ge 0$, $c>0$,
\begin{align}    \label{eq:brec1}
B(a,b,c+1) - B(a,b,c-1) = & \, \frac{a\,b\,(a+b+2c)\,(a+b+c-1)!}{(a+c)\,(b+c)\,a!\,b!\,c!},\\
 \hspace{-20pt}  \label{eq:brec2}
 B(a+1,b+1,c-1)+B(a,b,c)= & \, \frac{(a+b+c)!}{(a+b+1)\,a!\,b!\,(c-1)!} -1, \\
  \label{eq:brec3} B(a,a,a)+\frac{1+(-1)^a}2= & \,
\sum_{k=0}^{a-1}(-1)^{a-k-1} \, \frac{7a+2}{2a+1}\,\frac{(3a)!}{(a!)^3}.
\end{align}
% Besides, \begin{equation} 
% B(m,m,m)+\frac{1+(-1)^m}2= 
% \sum_{k=0}^{m-1}(-1)^{m-k-1} \, \frac{7k+2}{2k+1}\,\frac{(3k)!}{(k!)^3}.
% \end{equation}
\end{lemma}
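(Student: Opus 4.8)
All three formulas are coefficient statements about the rational function $F_B$ of $(\ref{eq:genfb})$, which for $S=3$ is $F_B=xyz/\bigl((1-x)(1-y)(1-z)(1-x-y-z)\bigr)$. The plan is to read $(\ref{eq:brec1})$ and $(\ref{eq:brec2})$ directly off $F_B$ by elementary series manipulations, and then to obtain the diagonal formula $(\ref{eq:brec3})$ by telescoping them. Throughout one uses that $B(\cdot)$ vanishes once any argument is $\le 0$, which follows from Theorem~$\ref{th:sms2}$ (equivalently, from the factor $xyz$ in $F_B$).

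For $(\ref{eq:brec2})$ the starting point is the elementary factorial identity
\[
\frac{(a+b+c)!}{(a+b+1)\,a!\,b!\,(c-1)!}=\binom{a+b}{a}\binom{a+b+c}{c-1},
\]
so that the claim becomes $B(a+1,b+1,c-1)+B(a,b,c)+1=\binom{a+b}{a}\binom{a+b+c}{c-1}$ for $c\ge1$. The left side is the $x^ay^bz^c$-coefficient of $\frac{z}{xy}F_B+F_B+\frac{z}{(1-x)(1-y)(1-z)}$; on the other side, summing $\binom{a+b}{a}$ over $a+b=s$ (which gives $(x+y)^s$) and then $\binom{a+b+c}{c-1}$ over $c$ produces the generating function $\frac{z}{(1-z)(1-x-y-z)}$. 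Cancelling the common factor $\frac{z}{(1-x)(1-y)(1-z)}$, the identity collapses to $z+xy+1-(x+y+z)=(1-x)(1-y)$, which is trivial.

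For $(\ref{eq:brec1})$ put $D(a,b,c)=B(a,b,c+1)-B(a,b,c-1)$; this is the $x^ay^bz^c$-coefficient of $\bigl(\tfrac1z-z\bigr)F_B=xy(1+z)/\bigl((1-x)(1-y)(1-x-y-z)\bigr)$. Multiplying by $1-x-y-z$ shows that $D$ satisfies the multinomial-type recurrence $D(a,b,c)=D(a-1,b,c)+D(a,b-1,c)+D(a,b,c-1)$ for $c\ge2$, together with $D(0,b,c)=D(a,0,c)=0$ and base value $D(a,b,1)=B(a,b,2)$. A short hockey-stick summation gives
\[
B(a,b,2)=\sum_{i<a,\;j<b}(i+j+2)\binom{i+j}{i}=b\binom{a+b+1}{b+1}-\binom{a+b}{b-1},
\]
which is the $c=1$ instance of the proposed right-hand side $\Phi(a,b,c)=\frac{ab(a+b+2c)(a+b+c-1)!}{(a+c)(b+c)\,a!\,b!\,c!}$ of $(\ref{eq:brec1})$. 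It then remains to check that $\Phi$ obeys the same recurrence for $c\ge2$ and the same boundary data; after cancelling the common factorials this becomes the rational identity
\[
\frac{(a+b+2c)(a+b+c-1)}{(a+c)(b+c)}=\frac{(a-1)(a+b+2c-1)}{(a+c-1)(b+c)}+\frac{(b-1)(a+b+2c-1)}{(a+c)(b+c-1)}+\frac{c(a+b+2c-2)}{(a+c-1)(b+c-1)},
\]
verified by clearing denominators, so induction on $a+b+c$ gives $D\equiv\Phi$. (Alternatively $D(a,b,c)=\sum_{i<a,\,j<b}\bigl(\frac{(i+j+c)!}{i!\,j!\,c!}+\frac{(i+j+c-1)!}{i!\,j!\,(c-1)!}\bigr)$ can be summed by two successive Gosper summations, but the factorisation of the answer emerges only after both, so the inductive route is cleaner.)

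Finally, $(\ref{eq:brec3})$ follows by telescoping the two formulas just proved along the diagonal. Specialising $(\ref{eq:brec2})$ at $(a,b,c)\mapsto(a-1,a-1,a+1)$ gives $B(a,a,a)+B(a-1,a-1,a+1)=\binom{3a-1}{a}\binom{2a-2}{a-1}-1$, while $(\ref{eq:brec1})$ at $(a,b,c)\mapsto(a-1,a-1,a)$ gives $B(a-1,a-1,a+1)-B(a-1,a-1,a-1)=\frac{2(a-1)^2(3a-3)!}{a(2a-1)\,((a-1)!)^3}$. Subtracting, writing $\beta_a=B(a,a,a)$ and $F=(3a-3)!/((a-1)!)^3$, and using $\binom{3a-1}{a}\binom{2a-2}{a-1}=\frac{(3a-1)(3a-2)}{a(2a-1)}F$, the algebraic identity $(3a-1)(3a-2)-2(a-1)^2=a(7a-5)$ collapses the right-hand side to $\frac{7a-5}{2a-1}F-1=g(a-1)-1$, where $g(k)=\frac{7k+2}{2k+1}\frac{(3k)!}{(k!)^3}$. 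Thus $\beta_a+\beta_{a-1}=g(a-1)-1$ with $\beta_0=0$, which telescopes to $\beta_a=\sum_{k=0}^{a-1}(-1)^{a-k-1}\bigl(g(k)-1\bigr)$; evaluating $\sum_{k=0}^{a-1}(-1)^{a-k-1}=\tfrac12\bigl(1-(-1)^a\bigr)$ rearranges this into $(\ref{eq:brec3})$. The generating-function arguments make $(\ref{eq:brec1})$ and $(\ref{eq:brec2})$ essentially mechanical, so I expect the only delicate point to be this last step, where the diagonal recurrence closes up precisely because $(3a-1)(3a-2)-2(a-1)^2$ carries the factor $a$ needed to cancel the denominator of $\binom{3a-1}{a}\binom{2a-2}{a-1}$.
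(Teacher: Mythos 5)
Your proposal is correct, and it takes a genuinely different route from the paper. The paper works directly with the explicit multinomial sums coming from Theorem \ref{th:sms2}: it first evaluates $S(j,c)=\sum_{i=0}^{a-1}(i+j+c)!/(i!\,j!\,c!)$ in closed form by Gosper's algorithm, observes that $S(j,c)$ alone is not Gosper-summable in $j$ but $S(j,c)+S(j,c-1)$ is, and telescopes the resulting certificate $T(j)$ to get (\ref{eq:brec1}) --- this non-summability is exactly why the natural closed form is for the two-step difference $B(a,b,c+1)-B(a,b,c-1)$; identity (\ref{eq:brec2}) then drops out of (\ref{eq:mcrec}) combined with the same $S(j,c)$. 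You instead read both identities off the generating function $F_B$: for (\ref{eq:brec2}) a one-line cancellation of rational functions, and for (\ref{eq:brec1}) the observation that $(1-x-y-z)(z^{-1}-z)F_B$ has no $z^c$-terms for $c\ge2$, giving a three-term multinomial-type recurrence that the proposed closed form $\Phi$ is verified to satisfy by induction. Your route to (\ref{eq:brec2}) is arguably cleaner than the paper's; for (\ref{eq:brec1}) the paper's Gosper computation produces the antidifference mechanically, while yours requires checking a rational-function identity by hand --- both are routine, and the final diagonal telescoping step is essentially the same in the two arguments.

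One point you should not gloss over: your derivation yields
\[
B(a,a,a)+\frac{1-(-1)^a}{2}=\sum_{k=0}^{a-1}(-1)^{a-k-1}\,\frac{7k+2}{2k+1}\,\frac{(3k)!}{(k!)^3},
\]
which differs from (\ref{eq:brec3}) as printed both in the sign inside $\frac{1\pm(-1)^a}{2}$ and in the summation variable of the summand (the printed summand is constant in $k$). Your version is the correct one: at $a=1$ it reads $1+1=2$ with right-hand side $2$, and at $a=2$ it reads $16+0=-2+18$, whereas the printed formula already fails at $a=1$. Relatedly, the recurrence $B(a+1,a+1,a+1)+B(a,a,a)=\frac{7a+2}{2a+1}\frac{(3a)!}{(a!)^3}-1$ forces $B(0,0,0)=0$, consistent with your $\beta_0=0$ and not with the initial value $1$ invoked in the paper's proof. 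So when you say your telescoped identity ``rearranges into (\ref{eq:brec3})'', you are in fact correcting a misprint in the statement; this should be said explicitly rather than silently matching a formula that does not hold as written.
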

\begin{proof}  By Gosper's summation \cite{Wiki} % \cite{zeilbab},
\[
S(j,c):=\sum_{i=0}^{a-1} \frac{(i+j+c)!}{i!j!c!} = \frac{a\,(a+j+c)!}{(j+c+1)\,a!\,j!\;c!}.
\]
The term $S(j,c)$ is not Gosper-summable with respect to $j$. 
But \mbox{$S(j,c)+S(j,c-1)$} is Gosper-summable: $S(j,c)+S(j,c-1)=T(j+1)-T(j)$ with
\[
% S(j,c)+S(j,c-1)=T(j+1)-T(j), \quad \mbox{with } 
T(j)=\frac{a\,j\,(a+j+2c)\,(a+j+c-1)!}{(a+c)\,(j+c)\,a!\,j!\,c!}.
\]
Telescoping summation gives the first formula. 
Formula (\ref{eq:mcrec}) with $(m_1,m_2,m_3)=(a,b,c-1)$ gives
\[
1+B(a+1,b+1,c-1)+B(a,b,c)= \sum_{i=0}^{c-1} \frac{(a+b+i)!}{a!b!i!}.
\]
The summation is actually the same as $S(j,c)$, giving the second formula.
Combining (\ref{eq:brec2})  with a shifted version of (\ref{eq:brec1}) gives 
\begin{align*} \label{eq:brec4}
B(a+1,b+1,c+1)+ &\, B(a,b,c)= \\
& \frac{(a+b+c+1)^2(a+b+c+2)+abc}{(a+b+1)(a+c+1)(b+c+1)}
\,\frac{(a+b+c)!}{a!b!c!} -1.
\end{align*}
This specializes to the recurrence
\begin{align}
B(a+1,a+1,a+1)+B(a,a,a)= \frac{7a+2}{2a+1}\,\frac{(3a)!}{(a!)^3} -1.
\end{align}
The initial condition $B(0,0,0)=1$ leads to the last formula. 
\end{proof}
Recurrence (\ref{eq:brec4}) gives the ``diagonal"  generating function
\begin{equation} \label{eq:brec}
\sum_{m=0}^{\infty} B(m,m,m)\,x^m=
\frac1{1+x}\sum_{k=0}^{\infty} \frac{7k+2}{2k+1}\,\frac{(3k)!}{(k!)^3} \,x^k
%\hpg43{1/2,9/7,1/3,2/3}{3/2,2/7,1}{27x}
-\frac1{1-x^2}.
\end{equation}

\section{Asymptotics}
\label{sec:asympt}

Asymptotics of coefficients of multivariate functions can be computed using the machinery
developed in \cite{PW02}, \cite{PW04}, \cite{RW08}. 
% and subsequent papers by Pemantle and Wilson. 
We compute the asymptotics for $E(n,\ldots,n)$, $E(a,b,c)$, $E(a,b,c,d)$ and $B(m_1,\ldots,m_S)$. 
\begin{theorem} \label{th:asnn}
$\displaystyle 
E(n,n,\ldots,n) \sim \frac{\sqrt{S} \,(S-1)^{Sn+S-1}}{\big( 2S(S-2)\pi n \big)^{\frac{S-1}2}}.
$
\end{theorem}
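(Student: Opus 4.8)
The plan is to extract the asymptotics of $E(n,n,\ldots,n)$ directly from the generating function $F_{\rm M}$ via the multivariate singularity-analysis machinery of Pemantle--Wilson, exploiting the full symmetry of the problem. The coefficient we want is the diagonal coefficient $[x_1^n\cdots x_S^n]\,F_{\rm M}$, so I would first set up the Cauchy integral
\[
E(n,\ldots,n)=\frac{1}{(2\pi i)^S}\oint\cdots\oint
\frac{dx_1\cdots dx_S}{x_1^{n+1}\cdots x_S^{n+1}\,\bigl(1-\sigma_2-2\sigma_3-\cdots-(S-1)\sigma_S\bigr)}
\]
and locate the dominant singularity of $F_{\rm M}$ on the diagonal. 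By the symmetry of the denominator and the positivity of the Taylor coefficients, the relevant minimal point lies on the diagonal $x_1=\cdots=x_S=t$; from the specialization computed in Lemma~2.5, namely $\det M=(1+t)^{S-1}(1-(S-1)t)$, the denominator vanishes on the diagonal exactly at $t_0=1/(S-1)$. This already explains the exponential rate: the contribution is governed by $t_0^{-Sn}=(S-1)^{Sn}$, matching the theorem.

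Next I would apply the smooth-point asymptotic formula of \cite{PW02} (or the refined version in \cite{RW08}) for the diagonal coefficient of $1/Q(x_1,\ldots,x_S)$ near a nondegenerate minimal point. The formula produces $t_0^{-Sn}$ times a polynomial correction of order $n^{-(S-1)/2}$, with the constant expressed through $Q$, its gradient, and the Hessian of the phase function $\log(x_1\cdots x_S)$ restricted to the singular variety $\{Q=0\}$, all evaluated at the critical point $(t_0,\ldots,t_0)$. Concretely, writing $Q=1-\sigma_2-2\sigma_3-\cdots-(S-1)\sigma_S$, I need (a) the value $x_S\,\partial Q/\partial x_S$ at the critical point, which by symmetry equals $\tfrac1S\sum_j x_j\partial_{x_j}Q=\tfrac1S\bigl(2\sigma_2+6\sigma_3+\cdots\bigr)$ evaluated at the diagonal point --- this is where a clean closed form must be pulled out using the factored form $(1+t)^{S-1}(1-(S-1)t)$ and its $t$-derivative; and (b) the determinant of the relevant $(S-1)\times(S-1)$ Hessian, which is exactly where Lemma~\ref{th:edet} is designed to be used: after changing coordinates to $u_j=\log x_j$ the curvature matrix has the ``identity plus rank-one plus diagonal'' shape of that lemma, so its determinant collapses to $\sigma_{S-1}+\sigma_{S-2}$ type expressions in a few auxiliary variables, which then simplify using $t_0=1/(S-1)$.

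Assembling the pieces, the logarithmic derivative of the factored denominator at $t_0$ gives the factor that produces the $2S(S-2)$ appearing under the root (the $S-2$ comes from $Q'(t_0)$ along the diagonal, since $(1+t)^{S-1}$ contributes $(S/(S-1))^{S-1}$ and differentiating the linear factor $1-(S-1)t$ contributes $-(S-1)$, while the transverse quadratic fluctuation supplies the remaining structure), the Hessian determinant contributes the power of $2\pi n$ together with the $\sqrt S$ prefactor, and the normalization $t_0^{-(S-1)}=(S-1)^{S-1}$ accounts for the $(S-1)^{S-1}$ sitting next to $(S-1)^{Sn}$ in the exponent $Sn+S-1$. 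The main obstacle I anticipate is bookkeeping: verifying that the critical point $(t_0,\ldots,t_0)$ is genuinely a \emph{strictly minimal, nondegenerate, smooth} point of the singular variety (so that the smooth-point theorem applies with no extra contributions and no degenerate-Hessian corrections), and then pushing the Hessian-determinant evaluation through cleanly enough via Lemma~\ref{th:edet} to land on the exact constant $\sqrt S\big/(2S(S-2)\pi)^{(S-1)/2}$ rather than something merely proportional to it. The $S=2$ case is degenerate (the denominator $1-x_1x_2$ has a different geometry and $E(n,n)=1$), consistent with the $S-2$ in the denominator, so the theorem is implicitly for $S\ge 3$; I would note that and treat $S\ge 3$ throughout.
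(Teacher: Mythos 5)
Your proposal follows essentially the same route as the paper: both identify the unique smooth contributing point on the diagonal at $x_i=1/(S-1)$ via the symmetric specialization $(1+t)^{S-1}(1-(S-1)t)$ of the denominator, and both extract the constant from the smooth-point asymptotic formula of Pemantle--Wilson/Raichev--Wilson by exploiting the ``diagonal plus constant off-diagonal'' structure of the Hessian (the paper invokes \cite[Proposition 3.4]{RW08} with off-diagonal entry $a=1-2/S$ where you propose Lemma~\ref{th:edet}, but these are interchangeable ways of evaluating the same determinant). Your observations about strict minimality, nondegeneracy, and the implicit restriction to $S\ge 3$ are correct and are the only points the paper's terse proof leaves unstated.
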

\begin{proof}
Let $H$ denote the denominator of $F_{\rm M}$ in (\ref{eq:fcard}).
In the context of \cite[\S 3]{PW02} or \cite[\S 3]{RW08}, the set of contributing points
is determined by $x_i\partial H/\partial x_i=x_j\partial H/\partial x_j$ for $i\neq j$, and $H=0$.
Since all coordinates $x_i$ must be positive, we have $x_i=x_j$. 
From (\ref{eq:mev}) we determine $x_i=1/(S-1)$. 
We have thus one smooth contributing point.
Let $\partial_j$ denote the differentiation $\partial/\partial x_j$ and 
eventual evaluation at the contributing point.
For the Hessian matrix, we evaluate simple binomial sums:
\[
\partial_i H % \left. \frac{\partial H}{\partial x_i} \right|_{{\rm at}\ Q} 
= -\left(\frac{S}{S-1}\right)^{S-2},
\qquad % \left. \frac{\partial^2 H}{\partial x_i\partial x_j} \right|_{{\rm at}\ Q} 
\partial_i \partial_j H= -2\left(\frac{S}{S-1}\right)^{S-3}.
\]
% \left. \frac{\partial H}{\partial x_i} \right|_{x_j=\frac{1}{S-1}} = -\left(\frac{S}{S-1}\right)^{S-2},
In \cite[Proposition 3.4]{RW08}, the off-diagonal entries equal $a=1-2/S$.
The result follows.
\end{proof}
Using factorials, we can write
\begin{align}
E(n,n,\ldots,n) \sim  \left( \frac{S-1}{\sqrt{S(S-2)}} \right)^{\!S-1}
\! \left( 1-\frac1S \right)^{\!Sn} \,  \frac{(Sn)!}{(n!)^S}.
\end{align}
For the Franel numbers, we have $E(n,n,n)\sim 2^{3n+1}/\sqrt3\pi n$ 
% is the asymptotics for Franel numbers
as attributed to Keane in  \cite{Sloan}. The next result specializes to the same
asymptotics of Franel numbers. % $E(n,n,n)$.
\begin{theorem}
As all three $a,b,c$ approach infinity in a proportional  manner,
\begin{align*}
E(a,b,c)\sim 
\frac{2^{a+b+c+1}}{\pi\sqrt{2ab+2ac+2bc-a^2-b^2-c^2}}
\frac{a!\,b!\,c!}{(a+b-c)! (a-b+c)! (b-a+c)!}.
\end{align*}
\end{theorem}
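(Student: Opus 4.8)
The plan is to apply the multivariate saddle-point machinery of \cite{PW02}, \cite{RW08} to the generating function $F_{\rm M}(x,y,z)=1/H$ with $H=1-\sigma_2-2\sigma_3=1-(xy+xz+yz)-2xyz$, exactly as in the proof of Theorem \ref{th:asnn}, but now along the diagonal-type direction $(a,b,c)$ with $a,b,c\to\infty$ proportionally rather than the fully symmetric direction. First I would locate the contributing point: it is the positive real solution of the critical-point equations $x\,\partial_x H=\lambda a$, $y\,\partial_y H=\lambda b$, $z\,\partial_z H=\lambda c$ together with $H=0$ (after clearing the Lagrange multiplier, $x\partial_x H:y\partial_y H:z\partial_z H=a:b:c$). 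I expect the answer to match the one implicit in the proof of Theorem \ref{th:hpg3pl}: from the expansion $F_{\rm M}=\sum_n x^ny^nz^n(2+1/x+1/y+1/z)^n$ one reads off that the dominant term in the sum over $n$ giving the $x^ay^bz^c$ coefficient comes near $n=r=\lfloor(a+b+c)/2\rfloor$, with $1/x\sim (p-a)/p$, $1/y\sim(p-b)/p$, $1/z\sim(p-c)/p$ where $p=(a+b+c)/2$; equivalently the contributing point is $x_\star=p/(p-a)$, and cyclically — wait, rather its reciprocal, so $x_\star=(p-a)/p$ up to the usual inversion. I would verify directly that this point satisfies $H=0$ and the critical equations.

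Next I would assemble the standard asymptotic formula. For a smooth point of a bivariate-or-higher denominator, \cite[Prop.~3.4]{RW08} (or \cite[Thm.~3.5]{PW02}) gives the coefficient as $x_\star^{-a}y_\star^{-b}z_\star^{-c}$ times $(2\pi)^{-(S-1)/2}$ (here $S=3$, so $(2\pi)^{-1}$) times $|x_S\partial_S H|^{-1}$ times the reciprocal square root of the determinant of the $2\times 2$ Hessian $\mathcal{H}$ of the parametrizing phase function, all evaluated at the contributing point, multiplied by the appropriate power of the ``radius''. The exponential factor $x_\star^{-a}y_\star^{-b}z_\star^{-c}$ should work out, after substituting the values above, to $(p/(p-a))^a(p/(p-b))^b(p/(p-c))^c$; combining this with Stirling applied to the factorial ratio $a!\,b!\,c!/((a+b-c)!(a-b+c)!(b-a+c)!)=a!\,b!\,c!/((2p-2c)!(2p-2b)!(2p-2a)!)$ — note $a+b-c=2(p-c)$ etc. — should reproduce exactly the factor $a!\,b!\,c!/((a+b-c)!(a-b+c)!(b-a+c)!)$ together with the claimed $2^{a+b+c}$ and the polynomial prefactor, since $(2p)!/((2p-2a)!(2p-2b)!(2p-2c)!)\cdot 2^{-2p+\dots}$ collapses via Stirling. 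I would carry the Stirling bookkeeping carefully but not display it in full.

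The main obstacle, as usual with this machinery, is the Hessian determinant computation: I need to show that the quadratic-form determinant appearing under the square root equals (a constant times) $2ab+2ac+2bc-a^2-b^2-c^2$, which is $4(p-a)(p-b)(p-c)/p\cdot$(something) — indeed one checks $2ab+2ac+2bc-a^2-b^2-c^2 = (a+b+c)^2-2(a^2+b^2+c^2)$... more usefully it factors in terms of the half-sums, being proportional to the product of the three ``triangle slacks''. Concretely I would compute the logarithmic Hessian of $H$ at the contributing point — the entries $\partial_i\partial_j H$ and $\partial_i H$ are evaluated just as in Theorem \ref{th:asnn}, giving small explicit expressions in $x_\star,y_\star,z_\star$ — then pass to the reduced $2\times 2$ Hessian of the parametrization (eliminating one variable via $H=0$), take its determinant, and simplify. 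I expect the symmetry of the final answer in $a,b,c$ and the known Franel specialization ($a=b=c=n$ gives $2ab+2ac+2bc-a^2-b^2-c^2=3n^2$, matching $E(n,n,n)\sim 2^{3n+1}/(\pi\sqrt{3}\,n)$ after the factorial factor becomes $(n!)^3/(n!)^3=1$ — check: $a+b-c=n$, so the factorial ratio is $n!\,n!\,n!/(n!\,n!\,n!)=1$, and $\pi\sqrt{3n^2}=\pi n\sqrt3$, $2^{3n+1}$, yes) to serve as a strong consistency check on the Hessian simplification. The remaining steps — verifying positivity of the contributing coordinates, checking there is a unique contributing point in the relevant direction, and confirming the nondegeneracy of the Hessian away from the boundary of the triangle-inequality cone — are routine given the hypotheses that $a,b,c$ grow proportionally and stay strictly inside that cone.
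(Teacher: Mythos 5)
Your overall strategy --- smooth-point asymptotics of Pemantle--Wilson/Raichev--Wilson applied to $F_{\rm M}=1/H$ with $H=1-(xy+xz+yz)-2xyz$, locating the critical point from $x\partial_xH:y\partial_yH:z\partial_zH=a:b:c$ and $H=0$, then matching via Stirling --- is exactly the paper's. But your concrete identification of the contributing point is wrong, and since every downstream quantity (exponential rate, Hessian, prefactor) is evaluated there, this is a real gap rather than a deferred routine check. With $p=(a+b+c)/2$, neither $x_\star=(p-a)/p$ nor its reciprocal lies on $H=0$: in the symmetric case $a=b=c$ your candidates give $x_\star=1/3$ (where $H=16/27\neq0$) or $x_\star=3$, whereas the correct value is $1/2$, consistent with the $x_i=1/(S-1)$ of Theorem \ref{th:asnn}. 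The resulting exponential factor $x_\star^{-a}y_\star^{-b}z_\star^{-c}$ would be $27^{n}$ instead of the Franel rate $8^{n}$, so the verification you postpone would fail and force you back to actually solving the critical equations. The correct point is
\begin{equation*}
x_\star=\frac{(a+b-c)(a-b+c)}{2a(b-a+c)}=\frac{(p-b)(p-c)}{a(p-a)},
\end{equation*}
and cyclically; one checks that $x_\star^{-a}y_\star^{-b}z_\star^{-c}=a^ab^bc^c\prod(p-a)^{-2(p-a)}$, which is what Stirling needs to reproduce the stated factorial ratio times $2^{a+b+c}$.

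The heuristic you used to guess the point is also flawed: in the expansion from the proof of Theorem \ref{th:hpg3pl}, the dominant term of the inner sum over $n$ is \emph{not} near $n=r=\lfloor(a+b+c)/2\rfloor$. For $a=b=c$ the term ratio condition $(1+\mu)(1-2\mu)^2=4\mu^3$ (with $n=a(1+\mu)$) gives $\mu=1/3$, i.e.\ $n\approx 4a/3$, not $3a/2$; at $n=r$ the terms have already collapsed because of the factor $1/(a+b+c-2n)!$. Your final Franel consistency check only confirms the theorem's \emph{statement}, not your derivation. Once the contributing point is corrected, the remainder of your plan (the $2\times2$ reduced Hessian yielding the factor $\sqrt{2ab+2ac+2bc-a^2-b^2-c^2}$, uniqueness and positivity of the point inside the triangle-inequality cone) coincides with what the paper does.
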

\begin{proof}
Considering $E(\alpha n,\beta n,\gamma n)$ as $n\to\infty$,
the only contributing point  $(x_1,x_2,x_3)$ for the 
positive direction $(\alpha,\beta,\gamma)$ is 
\[ \left( 
\frac{(\alpha+\beta-\gamma)(\alpha-\beta+\gamma)}{2\alpha(\beta-\alpha+\gamma)},  
\frac{(\alpha+\beta-\gamma)(\beta-\alpha+\gamma)}{2\beta(\alpha-\beta+\gamma)},  
\frac{(\alpha-\beta+\gamma)(\beta-\alpha+\gamma)}{2\gamma(\alpha+\beta-\gamma)}
\right).
\]
This gives $E(\alpha n,\beta n,\gamma n)\sim Cq^n/n$, where
\begin{align*}
q & = \frac{2^{\alpha+\beta+\gamma}\,\alpha^\alpha\,\beta^\beta\,\gamma^\gamma}
{(\alpha+\beta-\gamma)^{\alpha+\beta-\gamma}(\alpha-\beta+\gamma)^{\alpha-\beta+\gamma}
(\beta-\alpha+\gamma)^{\beta-\alpha+\gamma}}, \\
C & =\frac{2}{\pi} \sqrt{\frac{\alpha\,\beta\,\gamma}
{(\alpha+\beta-\gamma)(\alpha-\beta+\gamma)(\beta-\alpha+\gamma)
(2\alpha\beta+2\alpha\gamma+2\beta\gamma-\alpha^2-\beta^2-\gamma^2)}}.
\end{align*}
This is equivalent to the statement.
\end{proof}
If some number in $a,b,c$ remains bounded (but positive), the asymptotic estimate is wrong by a factor.
In particular, compare with $E(1,a,a)=2a$ and $E(1,a,a+1)=a+1$. 
The entity 
\[
\frac14\sqrt{2ab+2ac+2bc-a^2-b^2-c^2}
\]
is the area of the Euclidean triangle with the sides $\sqrt{a},\sqrt{b},\sqrt{c}$.

For $S=4$, the contributing points are determined by an algebraic equation 
of degree 4. To hide algebraic roots, the singular variety swept 
by the varying directions $(\alpha_1,\alpha_2,\alpha_3,\alpha_4)$ 
can be parametrized:
\begin{align} \label{eq:uvw}
\hspace{-5pt} (\alpha_1:\alpha_2:\alpha_3:\alpha_4)= & \nonumber \\
& \hspace{-60pt} \big( w(u+v-w-1) : (1-w)(u+v+w-2) : u(v-1) : (u-1)v \big), 
 \\ \label{eq:xyzt}
(x_1,x_2,x_3,x_4)= & \left( \frac{w}{u+v-w-1}, \frac{1-w}{u+v+w-2}, 
\frac{v-1}{u}, \frac{u-1}{v} \right).
\end{align}
We need real $u>1$, $v>1$, $w\in(0,1)$ for real positive $x_1,x_2,x_3,x_4$.
For those positive directions $(\alpha_1,\alpha_2,\alpha_3,\alpha_4)$
with exactly one suitable pre-image $(u,v,w)$ under (\ref{eq:uvw}),
there is just one contributing point (\ref{eq:xyzt}) and
\begin{align}
& E(\alpha_1n,\alpha_2n,\alpha_3n,\alpha_4n)\sim \frac{
\big(x_1^{\alpha_1}x_2^{\alpha_2}x_3^{\alpha_3}x_4^{\alpha_4}\big)^{\xi n}
}{4(u+v-1)\sqrt{Kx_1x_2x_3x_4}\;(\pi\xi n)^{-3/2}}
\end{align}
with $\xi=u(v-1)/\alpha_3$,
$K=  w(1-w)\big((u-v)^2\!+u+v-2\big)+(u-1)(v-1)(u+v-1).$
This applies to the symmetric direction $\alpha_1=\alpha_2=\alpha_3
=\alpha_4$ with $u=v=3/2$, $w=1/2$, consistent with Theorem \ref{th:asnn}. 

We finish off with asymptotics for the $B$-numbers of \S \ref{sec:allnash}.
\begin{theorem} \label{th:basympt}
With $m_1,\ldots,m_S$ all approaching infinity in a proportional manner,
\[
B(m_1,\ldots,m_S)\sim \frac{m_1\cdots m_S}{(M-m_1)\cdots(M-m_S)}
\frac{M!}{m_1!\cdots m_S!}, %{(m_1-1)!\cdots (m_S-1)!}.
\]
Here $M=m_1+\ldots+m_S$.
\end{theorem}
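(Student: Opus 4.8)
The plan is to extract the asymptotics of $B(m_1,\ldots,m_S)$ from its generating function
\[
F_B(x_1,\ldots,x_S)=\frac{x_1\cdots x_S}{(1-x_1)\cdots(1-x_S)(1-x_1-\ldots-x_S)}
\]
using the same smooth-point analytic combinatorics machinery of \cite{PW02}, \cite{RW08} already invoked in the preceding theorems. First I would locate the contributing singularity for the positive direction $(\mu_1,\ldots,\mu_S)$, $\mu_j=m_j/M$, on the singular variety. The denominator factors, so the singular variety has several components: the hyperplanes $x_j=1$ and the hyperplane $\sigma_1:=x_1+\ldots+x_S=1$. Near the origin the relevant sheet is $\{\sigma_1=1\}$ (the factors $1-x_j$ are units there), so the critical-point equations are $x_i\,\partial_i H = x_j\,\partial_j H$ on $H:=1-\sigma_1=0$, i.e.\ simply $x_i=x_j\cdot(\text{stuff})$; solving the Lagrange condition $\mu_j = x_j/(1-x_j) \big/ \sum_k x_k/(1-x_k)$ together with $\sigma_1=1$ gives the minimal point. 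I expect (and would verify) that the solution is $x_j = 1 - m_j/M = (M-m_j)/M$ for each $j$, which makes $\sigma_1 = S - (m_1+\ldots+m_S)/M = S-1$\,---\,wait, that violates $\sigma_1=1$ unless $S=2$, so the correct critical point must instead satisfy a rescaled relation; the honest computation sets $x_j$ so that $\sum x_j = 1$ with $x_j$ proportional-ish to $m_j$, and one finds $x_j = m_j/M$ is \emph{not} it either because of the $(1-x_j)$ factors. I would therefore just carry out the Lagrange computation directly: extremize $\sum m_j \log x_j$ subject to $\sum x_j = 1$, giving the critical point $x_j=m_j/M$, and then separately check that the factors $(1-x_j)$ and $x_1\cdots x_S$ in $F_B$, evaluated there, contribute the prefactor $\prod m_j/\prod(M-m_j)$.

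Concretely, the key steps in order are: (i) write $F_B = \dfrac{x_1\cdots x_S}{(1-x_1)\cdots(1-x_S)}\cdot\dfrac{1}{1-\sigma_1}$ and observe that the second factor alone is the generating function for multinomial coefficients, whose diagonal-type asymptotics at $x_j=m_j/M$ are classical and yield $\dfrac{M!}{m_1!\cdots m_S!}$ up to the standard $\big(2\pi\big)^{-(S-1)/2}$-type Gaussian factor; (ii) note that the first factor $\prod x_j/\prod(1-x_j)$ is \emph{analytic and nonzero} at the critical point $x_j=m_j/M$, hence by the smooth-point transfer theorem it simply multiplies the leading asymptotic term by its value there, namely $\prod_j \dfrac{m_j/M}{1-m_j/M} = \dfrac{m_1\cdots m_S}{(M-m_1)\cdots(M-m_S)}$; (iii) confirm that multiplying the full $F_{\rm M}$-style asymptotic of $1/(1-\sigma_1)$ by this constant reproduces exactly the claimed formula, with the Gaussian/Hessian factors from the multinomial part cancelling against the $m_j!$ normalization to leave the clean ratio $\dfrac{M!}{m_1!\cdots m_S!}$ as written (i.e.\ the theorem as stated absorbs the Stirling/Gaussian corrections into the factorials). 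Lemma~\ref{th:edet} is presumably the tool for evaluating the relevant Hessian determinant of $\log(1-\sigma_1)$ restricted to the critical hyperplane, so I would invoke it there.

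The main obstacle I anticipate is bookkeeping around \emph{which} singular-variety component is genuinely minimal (closest to the origin in the relevant polydisc) for the given direction, and making sure the hyperplanes $x_j=1$ do not interfere: since all $x_j=m_j/M<1$, the point $(m_1/M,\ldots,m_S/M)$ lies strictly inside the region where the $(1-x_j)$ are units, so only $1-\sigma_1$ is singular there and the analysis is a genuine smooth-point one\,---\,this needs to be stated cleanly but is not hard. A secondary subtlety is that the theorem is stated as an honest asymptotic equivalence $\sim$ with factorials rather than with explicit $\sqrt{\cdot}$ and $\pi$ factors; I would reconcile this by applying Stirling to $M!/(m_1!\cdots m_S!)$, showing it equals the Gaussian-corrected exponential term produced by the smooth-point formula for $1/(1-\sigma_1)$, so that the final answer is precisely the product of the analytic prefactor from step (ii) and this multinomial term. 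A brief remark that the argument parallels the proof of Theorem~\ref{th:asnn} (which is the special case of the $1/(1-\sigma_1)$-type analysis when the numerator is a monomial) would make the exposition efficient.
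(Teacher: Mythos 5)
Your proposal is correct and follows essentially the same route as the paper: a smooth-point analysis in the sense of \cite{PW02}, \cite{RW08} at the critical point $x_j=m_j/M$ on the component $\{x_1+\cdots+x_S=1\}$ of the singular variety, with the hyperplanes $x_j=1$ ruled out as contributors for strictly positive directions (the paper invokes \cite{PW04} to note that the multiple point $x_1=\cdots=x_S=1$ yields only sub-exponential growth, so the smooth point dominates). The one organizational difference is that you move $\prod_j x_j/(1-x_j)$ into the analytic numerator so that the remaining singular factor $1/(1-\sigma_1)$ has exactly the multinomial coefficients $M!/(m_1!\cdots m_S!)$, which lets you bypass the explicit Hessian evaluation the paper carries out via Lemma~\ref{th:edet}; that is a legitimate shortcut that also explains directly why the answer takes the clean factorial form.
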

\begin{proof}
We consider asymptotics of $B(\alpha_1n,\ldots,\alpha_Sn)$ with positive $\alpha_1,\ldots,\alpha_S$.
Let $A=\alpha_1+\ldots+\alpha_S$. The possible contributing points are:
\begin{itemize}
\item An isolated smooth point  $(x_1,\ldots,x_S)=(\alpha_1/A,\ldots,\alpha_S/A)$.
\item Intersection points of any two (or more) hyperplanes $x_j=1$. 
\end{itemize}
At the isolated point, all $\partial_jH=-(A-\alpha_1)\cdots(A-\alpha_S)/A^S$.
The Hessian matrix has simplified entries, 
as $c_d$ gets multiplied by zero inside the brackets
in \cite[Theorem 3.3]{RW08}:
\[
h_{ij}=\frac{x_ix_j\partial_iH\,\partial_jH}{x_S^2(\partial_SH)^2}
=\frac{\alpha_i\alpha_j}{\alpha_S^2}, \quad
h_{jj}=\frac{x_j\partial_jH}{x_S\partial_SH}+\left(\frac{x_j\partial_jH}{x_S\partial_SH}\right)^2
=\frac{\alpha_j(\alpha_j+\alpha_S)}{\alpha_S^2}.
\]
The Hessian determinant equals
\[
\frac{\alpha_1^2\alpha_2^2\cdots \alpha_{S-1}^2}{\alpha_S^{2S-2}} \, \det \widetilde{H}
=\frac{A\,\alpha_1\alpha_2\cdots \alpha_{S-1}}{\alpha_S^S}, % (a_1+\ldots+a_S)}{a_S^S},
\]
where $\widetilde{H}$ is the matrix in Lemma \ref{th:edet} with $x_j=\alpha_S/\alpha_j$. 
The isolated point gives the contribution
\begin{equation} \label{eq:domc} % B(a_1n,\ldots,a_Sn)
\sim \frac{\sqrt{A\,\alpha_1\,\cdots\,\alpha_S}}
{(2\pi n)^{\frac{S-1}2}(A-\alpha_1)\cdots(A-\alpha_S)}
\left( \frac{A^A}{\alpha_1^{\alpha_1}\,\cdots\,\alpha_S^{\alpha_S}} \right)^n.
\end{equation}
Intersections of $k$ hyperplanes contribute  \cite{PW04} 
only if the complementary $S-k$ direction components $\alpha_j$ are zero.
We consider only $k=S$, thus the multiple point $x_1=\ldots=x_S=1$.
It contributes sub-exponential asymptotics, thus (\ref{eq:domc}) dominates.
After rewriting  (\ref{eq:domc}) in terms of $m_j$'s and factorials, we get the result.
\end{proof}
\begin{corollary} $\displaystyle
B(m,m,\ldots,m) \sim \frac{S^{Sm+\frac12}}{\big( 2\pi m \big)^{\frac{S-1}2}\,(S-1)^S}.$
\end{corollary}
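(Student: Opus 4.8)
The plan is to derive the corollary directly from Theorem~\ref{th:basympt} by specializing $m_1=\ldots=m_S=m$, so that $M=Sm$ and $M-m_j=(S-1)m$ for every $j$. The prefactor $m_1\cdots m_S/((M-m_1)\cdots(M-m_S))$ becomes $m^S/((S-1)m)^S=1/(S-1)^S$, which is already in the claimed shape, so the only real work is to pass from the factorial ratio $M!/(m_1!\cdots m_S!)=(Sm)!/(m!)^S$ to the elementary expression $S^{Sm+1/2}/(2\pi m)^{(S-1)/2}$.

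First I would invoke Stirling's formula $k!\sim\sqrt{2\pi k}\,(k/e)^k$ applied to both $(Sm)!$ and each $m!$. In the ratio the $e$-powers cancel (numerator contributes $e^{-Sm}$, denominator $e^{-Sm}$), the pure powers combine to $(Sm)^{Sm}/m^{Sm}=S^{Sm}$, and the square-root factors give $\sqrt{2\pi Sm}/(2\pi m)^{S/2}$. Collecting these yields
\begin{equation*}
\frac{(Sm)!}{(m!)^S}\sim \frac{\sqrt{2\pi S m}}{(2\pi m)^{S/2}}\,S^{Sm}
=\frac{S^{Sm+1/2}}{(2\pi m)^{(S-1)/2}}.
\end{equation*}
Multiplying by the constant $1/(S-1)^S$ from the prefactor gives exactly the asserted estimate $B(m,\ldots,m)\sim S^{Sm+1/2}/\big((2\pi m)^{(S-1)/2}(S-1)^S\big)$.

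There is essentially no obstacle here: the argument is a one-line substitution into an already-proved theorem followed by a routine Stirling expansion. The only point that warrants a word of care is that the asymptotic relation in Theorem~\ref{th:basympt} was stated for $m_1,\ldots,m_S$ approaching infinity \emph{in a proportional manner}, and the fully symmetric choice $m_1=\ldots=m_S=m$ is the most natural instance of such a regime (direction $(\alpha_1,\ldots,\alpha_S)=(1,\ldots,1)$), so the hypothesis is met. Thus the proof is simply: specialize, then apply Stirling's formula to $(Sm)!$ and to $m!^S$, then simplify.
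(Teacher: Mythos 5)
Your proof is correct and is exactly the intended derivation: the paper states this corollary without proof as an immediate specialization of Theorem~\ref{th:basympt}, and your substitution $m_j=m$, $M=Sm$ followed by Stirling's formula for $(Sm)!/(m!)^S$ is the routine computation that fills it in.
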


\vspace{12pt}

{\bf Acknolwledgement.} The author is grateful to Ira Gessel for informing about the full extent 
of relevant MacMahon's contribution, and the relation to Laguerre polynomials.

\small

\end{document}